\DeclareMathOperator{\Spec}{Spec} 
\DeclareMathOperator{\Mf}{Mf}
\DeclareMathOperator{\Hilb}{Hilb}
\DeclareMathOperator{\Gr}{Gr} 
\DeclareMathOperator{\colen}{colen}
\DeclareMathOperator{\len}{len}
\DeclareMathOperator{\Sym}{Sym}
\newcommand{\Calo}{\mathcal{O}} 
\newcommand{\Calq}{\mathcal{Q}} 
\newcommand{\Cals}{\mathcal{S}}
\newcommand{\C}{\mathbb{C}} 
\newcommand{\A}{\mathbb{A}}
\newcommand{\N}{\mathbb{N}}
\newcommand{\Pj}{\mathbb{P}}
\DeclareMathOperator{\Hom}{Hom}
\DeclareMathOperator{\Gl}{Gl}
\newtheorem{theorem}{Theorem}[section]
\newtheorem{proposition}[theorem]{Proposition}
\newtheorem{corollary}[theorem]{Corollary}
\newtheorem{lemma}[theorem]{Lemma}
\newtheorem{conjecture}{Conjecture}
\newtheorem*{theoremIntro}{Theorem}
\theoremstyle{definition}
\newtheorem{definition}[theorem]{Definition}
\theoremstyle{remark} 
\newtheorem{remark}[theorem]{Remark}
\newtheorem{example}[theorem]{Example}
\newtheorem*{conjecture*}{Conjecture} 
\numberwithin{equation}{section}
\title[A counterexample to the parity conjecture]{A counterexample to the parity conjecture}
\author[F. Giovenzana]{Franco Giovenzana}
\address[F. Giovenzana]{Laboratoire de Math\'ematiques d’Orsay\\ Universit\'e Paris-Saclay\\Rue Michel Magat, B\^at. 307, 91405\\ Orsay, France}
\email{\href{mailto:franco.giovenzana@universite-paris-saclay.fr}{franco.giovenzana@universite-paris-saclay.fr}}
\author[L. Giovenzana]{Luca Giovenzana}
\address[L. Giovenzana]{Fakult\"at f\"ur Mathematik\\ Technische Universit\"at Chemnitz\\
	Reichenhainer Stra\ss e 39, 09126 Chemnitz, Germany}
\curraddr{Department of Pure Mathematics\\ University of Sheffield\\ Hicks Building, Hounsfield Road\\ Sheffield, S3 7RH\\ UK}
\email{\href{mailto:l.giovenzana@sheffield.ac.uk}{l.giovenzana@sheffield.ac.uk}}
\author[M. Graffeo]{Michele Graffeo} 
\address[M. Graffeo]{Scuola Internazionale Superiore di Studi Avanzati (SISSA), Via Bonomea 265, 34136 Trieste, Italy}
\email{\href{mailto:mgraffeo@sissa.it}{mgraffeo@sissa.it}}
\author[P. Lella]{Paolo Lella} 
\address[P. Lella]{Dipartimento di Matematica\\ Politecnico di Milano\\ Piazza Leonardo da Vinci 32\\ 20133 Milan\\ Italy}
\email{\href{mailto:paolo.lella@polimi.it}{paolo.lella@polimi.it}}
\subjclass[2020]{14C05,	13D10, 	13F55,	13P10}
\keywords{parity conjecture, Hilbert scheme of points, 12 points}
\thanks{This work is partially supported by  the Thematic Research Programme \lq\lq Tensors: geometry, complexity and quantum entanglement\rq\rq, University of Warsaw, Excellence Initiative – Research University and the Simons Foundation Award No. 663281 granted to the Institute of Mathematics of the Polish Academy of Sciences for the years 2021-2023. F.G.'s research is funded by Deutsche Forschungsgemeinschaft (DFG, German Research Foundation), Projektnummer 509501007. L.G.~was supported by Engineering and Physical Sciences
	Research Council (EPSRC) New Investigator Award EP/V005545/1 \lq\lq Mirror
	Symmetry for Fibrations and Degenerations\rq\rq. L.G.~was supported by the DFG research grant Le 3093/3-2. M.G. and P.L. were partially supported by the project PRIN 2020 \lq\lq Squarefree Gr\"obner degenerations, special varieties and related topics\rq\rq~(MUR, project number 2020355B8Y).
	All authors are members of GNSAGA}
\begin{document}
	
\begin{abstract}
	Let $[Z]\in\Hilb^d\A^3$ be a zero-dimensional subscheme of the affine three-dimensional complex space of length $d>0$. Okounkov and Pandharipande have conjectured that the dimension of the tangent space to $\Hilb^d\A^3$ at $[Z]$ and $d$ have the same parity. The conjecture was proven by Maulik, Nekrasov, Okounkov and Pandharipande for points $[Z]$ defined by monomial ideals and very recently by Ramkumar and Sammartano for homogeneous ideals. In this paper we exhibit a family of zero-dimensional schemes in $\Hilb^{12} \A^3$, which disproves the conjecture in the general non-homogeneous case.
\end{abstract}
	
\maketitle

	\section{Introduction}\label{sec:intro}
	
	Given a quasi-projective variety $X$ defined over the field of complex numbers and a positive integer $d>0$, the Hilbert scheme $\Hilb^d X$ of $d$ points on $X$ is the scheme parametrising zero-dimensional subschemes of length $d$ of $X$. It is a quasi-projective scheme and it was introduced by Alexander  Grothendieck in \cite{FGA}.
	
	Over the last decades the study of Hilbert schemes of points has been a central topic of research and even though a number of results have been proven, several questions about their geometry remain open.
	By a classical result of Fogarty, we know that the Hilbert scheme $\Hilb^d X$ of a connected variety $X$ is connected for all $d$ \cite{FOGARTY}. If $X$ is  smooth and irreducible, then $\Hilb^d X$ is smooth, and hence irreducible, as long as $\dim X\leqslant 2$ \cite{FOGARTY}. In higher dimension, $\Hilb^d X$ is smooth for $\dim X\geqslant 3$ and $d\leqslant 3$, and singular otherwise. Recently, in \cite{joachim-pathologies} Jelisiejew showed that its singularities are pathological proving that Hilbert schemes of points satisfy Vakil’s Murphy’s Law up to retraction \cite{Vakil}. Another open question concerns the irreducibility of these schemes. It is known that, when $\dim X\geqslant 4$, $\Hilb^d X$ is irreducible for $d\leqslant 7$ and reducible otherwise \cite{Iarrob,MAZZOLA,8POINTS}. On the other hand, the problem of determining the irreducibility of $\Hilb^d X$ for a smooth irreducible threefold $X$ is only partially solved. Indeed, it has been recently proved that it is irreducible for $d\leqslant 11$  \cite{Klemen,10points,JOACHIM}, and by a classical result of Iarrobino it is reducible for $d\geqslant 78$ \cite{IARRO}, but nothing is known for the intermediate cases.
	
	It is worth concluding this historical introduction by mentioning that, although it is known that when $\dim X \geqslant 4$ and  $d$ is big enough,  $\Hilb^d X$ has non-reduced components, nothing is known about the reducedness of the Hilbert schemes of points on a smooth threefold \cite[Problem XV]{JoachimQuestions}, see also \cite{szachniewicz2021nonreducedness} for more examples.
	
	The study of the singularities of $\Hilb^d  X$ has benefited from renewed interest in the mathematical community due to the connection  with enumerative geometry established by Maulik, Nekrasov, Okounkov, and Pandharipande \cite{PANDHARIPANDE}.
	In particular, Okounkov and Pandharipande formulate the following conjecture
	
	\begin{conjecture}[Parity conjecture \cite{CONGETTURA}]\label{conjecture}
		Let $d\in\N$ be a positive integer and let $X$ be an irreducible smooth threefold. Then, for any $[Z]\in \Hilb^d X $, one has
		\[\dim_{\C} \mathsf T_{\mbox{\tiny $[Z]$}}\Hilb^d X\equiv d\  \pmod 2,\]
		where $ \mathsf T_{\mbox{\tiny $[Z]$}}\Hilb^d X$ denotes the tangent space of $\Hilb^d X$ at $[Z]$.
	\end{conjecture}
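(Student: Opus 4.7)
The plan is to \emph{disprove} the conjecture by exhibiting a zero-dimensional subscheme $[Z]\in\Hilb^{12}\A^3$ whose tangent space has odd dimension, so that its parity differs from $d=12$. Since the conjecture is already known for monomial ideals and for homogeneous ideals, the counterexample must be cut out by a non-homogeneous, non-monomial ideal $I\subset R=\C[x,y,z]$. Moreover, the value $d=12$ is a reasonable place to look: it is the smallest length where $\Hilb^d\A^3$ becomes reducible within the range accessible by direct computation, so it is also where the most pathological local geometry is expected to first appear.

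The concrete strategy is to start from a monomial ideal $I_0$ of colength $12$ lying on the intersection of several irreducible components of $\Hilb^{12}\A^3$, and to perturb it by a non-homogeneous flat deformation $I_t$ in a direction that preserves the colength but breaks the grading. For each such family I would compute $\dim_{\C}\Hom_R(I_t,R/I_t)$, which is the tangent space dimension, by extracting a free presentation $R^a\to R^b\to I_t\to 0$ from a Gr\"obner basis and reading off the kernel of the induced map $\Hom_R(R^b,R/I_t)\to\Hom_R(R^a,R/I_t)$. Since the tangent space dimension is upper semicontinuous, one looks for a perturbation where it drops by an \emph{odd} amount, as this would immediately violate the parity condition. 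All such computations can be performed symbolically in Macaulay2 over $\mathbb{Q}$, so parity statements are fully rigorous.

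The main obstacle is the search itself: the space of non-homogeneous deformations of a colength-$12$ ideal is huge, and blind enumeration is hopeless. I would narrow it down by exploiting the $\Gl_3$-action on $\Hilb^{12}\A^3$ to fix coordinates, and by focusing on monomial ideals that are already known to sit at bad points of $\Hilb^{12}\A^3$, such as those at the intersection of the smoothable component with an elementary component, since these are the natural candidates to carry extra tangent directions whose parity might not agree with the MNOP count at $I_0$. Once a candidate family is identified, the remaining verification is a finite, certifiable Macaulay2 computation, which could be documented by auxiliary scripts as the authors indeed do.
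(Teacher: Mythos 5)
Your proposal correctly identifies that the conjecture must be \emph{disproved}, that the counterexample should be non-homogeneous and non-monomial, and that $d=12$ is the right place to look. Your verification scheme (compute $\dim_\C\Hom_R(I,R/I)$ symbolically over $\mathbb{Q}$ via a free presentation, so parity is rigorous) is sound, and it is essentially equivalent to the marked-basis tangent-space computation the paper carries out. In broad outline your search heuristic also matches what happens in the paper: the paper's ideal $I=((x)+(y,z)^2)^2+(y^3-xz)$ is indeed a non-homogeneous perturbation of the colength-$12$ monomial ideal $J=((x)+(y,z)^2)^2+(y^3)$, and the tangent dimension drops from $54$ at $J$ to $45$ at $I$ --- an odd drop of $9$, exactly the phenomenon you propose to hunt for via semicontinuity.

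However, what you have written is a search strategy, not a proof. A disproof of the conjecture requires exhibiting an explicit ideal and certifying its colength and tangent dimension; the proposal never produces one, and there is no argument that your heuristics (perturb monomial ideals sitting at intersections of irreducible components) would terminate or land on a working example. The paper's actual content is precisely the explicit ideal, the closed-form description of when the tangent space is $45$ (namely the non-vanishing of $\mathfrak{B}=b_3b_4^3-b_2b_4^2b_5+b_1b_4b_5^2-b_5^3$), and the human-checkable marked-basis reduction in the appendix that pins down the tangent space. Two of your guiding heuristics are also off: it is \emph{not} known that $\Hilb^{12}\A^3$ is reducible (irreducibility is open for $d=12$), and the paper's counterexample is smoothable and is not claimed to lie on an intersection of the smoothable component with an elementary one, so ``bad points at component intersections'' is not where the example lives. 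Without the explicit ideal, the certifying computation, and a corrected rationale, the proposal does not yet constitute a refutation.
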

	Conjecture \ref{conjecture} was proven to hold for monomial ideals in \cite{PANDHARIPANDE} and very recently for homogeneous ideals in \cite{SAMMARTANO}.
	In this paper we exhibit counterexamples to Conjecture \ref{conjecture}, proving that the parity conjecture does not hold for $\Hilb^d \mathbb A^3$ whenever $d\geqslant 12$.
	For instance, the ideal
	\begin{equation}\label{eq:baby-example}
		\begin{aligned}
			I:=&(x+(y,z)^2)^2 + (y^3 - xz) \\
			=& (x^2, xy^2, xyz, xz^2, y^2z^2, yz^3, z^4, y^3-xz)
		\end{aligned}
	\end{equation}
	is immediately checked to be of colength 12 and standard routines on a computer algebra software like \textit{Macaulay2} \cite{M2} can compute that $\dim T_{[I]}\Hilb^{12} \A^3 =45$, hence showing the main result of the paper.
	\begin{theoremIntro}[Corollary~\ref{cor:conjecture}]
		The parity conjecture is false for any $d\geqslant 12$.
	\end{theoremIntro}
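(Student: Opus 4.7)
The plan is to leverage the specific length-12 counterexample already exhibited in equation~\eqref{eq:baby-example} and extend it to arbitrary $d \geq 12$ by appending generic reduced points.

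First I would verify the two numerical facts claimed for the ideal $I$ in \eqref{eq:baby-example}, namely that $\mathrm{colen}(I) = 12$ and $\dim_{\C} \mathsf T_{[I]} \Hilb^{12}\A^3 = 45$. The colength can be read off from the listed generators by computing a standard monomial basis of $\C[x,y,z]/I$, while the tangent space dimension equals $\dim_{\C} \Hom_{\C[x,y,z]}(I, \C[x,y,z]/I)$, which is a finite computation in Macaulay2. This already settles the case $d = 12$, since $45$ is odd while $12$ is even, violating Conjecture~\ref{conjecture}.

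For $d > 12$, I would form a new zero-dimensional subscheme $Z' \subset \A^3$ of length $d$ by taking the disjoint union of the scheme $Z = V(I)$ (supported at the origin) with a reduced subscheme $W$ consisting of $d - 12$ distinct closed points chosen in the complement of the support of $Z$. Since $Z$ and $W$ have disjoint supports, the local-to-global nature of the $\Hom$ functor gives a direct-sum decomposition
\begin{equation*}
\mathsf T_{[Z']} \Hilb^{d} \A^3 \;\cong\; \mathsf T_{[Z]} \Hilb^{12} \A^3 \;\oplus\; \mathsf T_{[W]} \Hilb^{d-12} \A^3.
\end{equation*}
Each of the $d-12$ reduced points of $W$ is a smooth point of $\A^3$, and the Hilbert scheme is smooth of dimension $3(d-12)$ at $[W]$, so $\dim_{\C} \mathsf T_{[W]} \Hilb^{d-12} \A^3 = 3(d-12)$.

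Combining these contributions gives
\begin{equation*}
\dim_{\C} \mathsf T_{[Z']} \Hilb^{d} \A^3 \;=\; 45 + 3(d-12) \;=\; 3d + 9.
\end{equation*}
Since $3d + 9 - d = 2d + 9$ is odd, the parity of the tangent space dimension differs from that of $d$, so $[Z']$ provides a counterexample for every $d \geq 12$. The main obstacle is the Macaulay2 verification in the first step; everything afterwards is formal, relying only on the standard disjoint-support decomposition of tangent spaces and the known smoothness of the Hilbert scheme at a reduced point.
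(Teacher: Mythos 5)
Your proposal is correct and follows essentially the same route as the paper: verify the length-$12$ example by a direct tangent-space computation (the paper does this via Theorem~\ref{thmA} using marked bases), and then pass from $d=12$ to arbitrary $d\geqslant 12$ by adjoining $d-12$ distinct reduced points away from the support. The only difference is that you spell out the disjoint-support direct-sum decomposition of the tangent space and the resulting parity count $45 + 3(d-12) = 3d+9$, whereas the paper leaves this step implicit.
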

	
	We make here some comments. Our main result shows that the parity conjecture fails for $d\geqslant 12$ and at the moment we do not know any counterexample of smaller length. At the same time the irreducibility of $\Hilb^d \A^3$ is known for $d\leqslant 11$, while for $d=12$ it is not known whether the Hilbert scheme is irreducible.  From this observation one might wonder if irreducibility is related to the parity conjecture. To this direction, it is worth mentioning that the counterexample we present in Equation \eqref{eq:generic-counterexample} is smoothable as we show in Proposition \ref{prop:smoothability}. Indeed, it can be deformed to the disjoint union of 4 fat points as depicted in Figure \ref{fig:collision}.

	Further, even if it is possible to produce counterexamples of greater length by adding $k$-tuples of distinct points disjoint from the support of our example, we emphasize that we can exhibit other counterexamples, not obtained in this way. We give an example in Remark \ref{rem:other-cexamples}.
	
	The parity conjecture is also linked to a long-standing open problem in enumerative geometry, namely the constancy of the Behrend function $\nu_{\Hilb^d\A^3}$ on $\Hilb^d\A^3$ \cite{Beh,SignAnn}. Recently, in \cite{behrendnonconstant} it was shown how the failure of the parity conjecture implies the non-constancy of $\nu_{\Hilb^d\A^3}$. In a future project we will address the problem of computing the Behrend function at points disproving the parity conjecture by generalising the techniques introduced in \cite{GRICOLFI}.

\subsection*{Plan of the paper}
After recalling basic notions of Hilbert schemes and marked families in Section \ref{sec:preliminaries}, we prove the main result in Section \ref{sec:mainresults}. Also, we investigate the family of counterexamples given by ideals modeled on the one given in Equation \eqref{eq:baby-example}, and we show in Proposition \ref{prop:smoothability} that all of them lie in the smoothable component.
	
	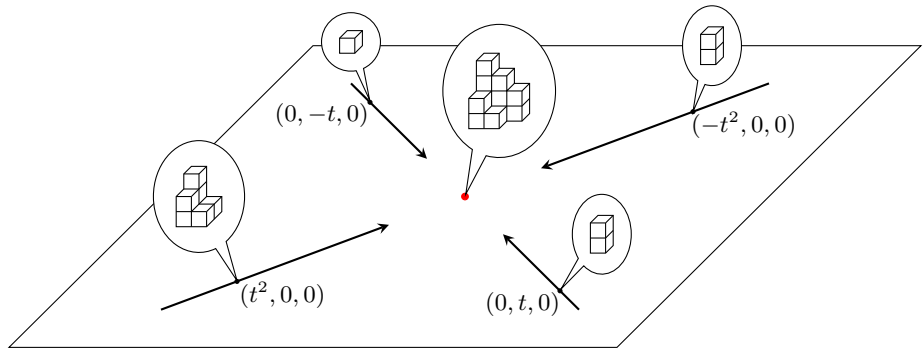
\begin{figure}
		\centering
		\begin{tikzpicture}
			\draw[] (-6,-2) -- (2,-2) -- (6,2) -- (-2,2) -- cycle;
			
			\draw [fill] (1.25,-1.25) circle (0.75pt);
			\node (A) at (1.25,-1.25) [xshift=-14pt,yshift=-5pt] {\small $(0,t,0)$};
			\draw[thick,-stealth] (1.5,-1.5) -- (0.5,-0.5);
			
			\draw [fill] (-1.25,1.25) circle (0.75pt);
			\node (B) at (-1.25,1.25) [xshift=-18pt,yshift=-5pt] {\small $(0,-t,0)$};
			\draw[thick,-stealth] (-1.5,1.5) -- (-0.5,0.5);
			
			\draw [fill] (-3,-1.125) circle (0.75pt);
			\node (C) at (-3,-1.125) [xshift=17pt,yshift=-5pt] {\small $(t^2,0,0)$};
			\draw[thick,-stealth] (-4,-1.5) -- (-1,-0.375);
			
			\draw [fill] (3,1.125) circle (0.75pt);
			\node (D) at (3,1.1666) [xshift=19pt,yshift=-6pt] {\small $(-t^2,0,0)$};
			\draw[thick,-stealth] (4,1.5) -- (1,0.375);

			\node[draw,ellipse callout, callout absolute pointer={(-1.255,1.28)},fill=white] at (-1.5,2.05) {\tikz[scale=0.2]{
					\draw (0,0) -- (1,0) -- (1.5,0.5) -- (1.5,1.5) -- (0.5,1.5) -- (0,1) -- cycle;
					\draw (1,1) -- (1,0);
					\draw (1,1) -- (0,1);
					\draw (1,1) -- (1.5,1.5);
			}};
			
			\node[draw,ellipse callout, callout absolute pointer={(1.265,-1.225)},fill=white] at (1.8,-0.5) {\tikz[scale=0.2]{
					\draw (0,0) -- (1,0) -- (1.5,0.5) -- (1.5,2.5) -- (0.5,2.5) -- (0,2) -- cycle;
					\draw (1,2) -- (1,0);
					\draw (1,2) -- (0,2);
					\draw (1,2) -- (1.5,2.5);
					\draw (1,1) -- (0,1);
					\draw (1,1) -- (1.5,1.5);
			}};
			
			\node[draw,ellipse callout, callout absolute pointer={(3,1.125)},fill=white] at (3.25,2) {\tikz[scale=0.2]{
					\draw (0,0) -- (1,0) -- (1.5,0.5) -- (1.5,2.5) -- (0.5,2.5) -- (0,2) -- cycle;
					\draw (1,2) -- (1,0);
					\draw (1,2) -- (0,2);
					\draw (1,2) -- (1.5,2.5);
					\draw (1,1) -- (0,1);
					\draw (1,1) -- (1.5,1.5);
			}};
			
			\node[draw,ellipse callout, callout absolute pointer={(-3,-1.115)},fill=white] at (-3.5,.) {\tikz[scale=0.2]{
					\draw (0,0) -- (2,0) -- (3,1) -- (3,2) -- (2,2) -- (2,4) -- (1,4) -- (0.5,3.5) -- (0.5,2.5) --(0,2) -- cycle;
					\draw (1,2) -- (1,0);
					\draw (1,2) -- (0,2);
					\draw (1,2) -- (2,3);
					\draw (2,1) -- (0,1);
					\draw (1,1) -- (2,2);
					\draw (2,1) -- (2,0);
					\draw (2,1) -- (3,2);
					\draw (2.5,1.5) -- (2.5,0.5);
					\draw (2.5,1.5) -- (1.5,1.5) -- (1.5,3.5) -- (2,4);
					\draw  (1.5,3.5) -- (0.5,3.5);
					\draw  (1.5,2.5) -- (0.5,2.5);
			}};
			
			\draw [fill=red,red] (0,0) circle (1.25pt);
			
			\node[draw,ellipse callout, callout absolute pointer={(0.01,0.03)},fill=white] at (.45,1.4) {\tikz[scale=0.2]{
					\draw (0,0) -- (2,0) -- (2.5,0.5) -- (3.5,0.5) -- (4,1) -- (4,3) -- (3,3) -- (3,4) -- (2,4) -- (2,5) -- (1,5) -- (0.5,4.5) -- (0.5,2.5) -- (0,2) -- cycle;
					\draw (0,1) -- (2,1) -- (2.5,1.5) -- (3.5,1.5) -- (4,2);
					\draw (0,2) -- (1,2) -- (1.5,2.5) -- (3.5,2.5) -- (4,3);
					\draw (0.5,2.5) -- (1.5,2.5) -- (1.5,4.5);
					\draw (0.5,3.5) -- (1.5,3.5) -- (2,4);
					\draw (0.5,4.5) -- (1.5,4.5) -- (2,5);
					\draw (1.5,3.5) -- (2.5,3.5) -- (3,4);
					\draw (2.5,3.5) -- (2.5,2.5) -- (3,3);
					\draw (1,2) -- (1,0);
					\draw (1.5,2.5) -- (1.5,1.5) -- (1,1);
					\draw (2.5,2.5) -- (2.5,0.5) ;
					\draw (3.5,2.5) -- (3.5,0.5) ;
					\draw (1.5,1.5) -- (2.5,1.5);
					\draw (2,1) -- (2,0);
			}};
		\end{tikzpicture}
		\caption{A degeneration of four fat points leading to a counterexample to the parity conjecture.}
		\label{fig:collision}
	\end{figure}

In Section \ref{sec:description} we describe the locus in $\Hilb^{12}\A^3$ that the family of counterexamples determine. We are not aware if these are all the ideals of length 12 for which the parity conjecture fails.
	
In Appendix \ref{APPENDICE} we provide and explain the computations necessary for the proof of the main results.

\subsection*{Acknowledgments}
We are grateful for the research conditions and the stimulating environment that we were provided with during the workshop \lq\lq Deformation Theory\rq\rq ~held in December 2022 in Warsaw as a part of the semester \textit{\lq\lq Algebraic Geometry with Applications to TEnsors and Secants\rq\rq}. Special thanks to Andrea Ricolfi for his support and guidance during the writing of the paper. It is our pleasure to thank Joachim Jelisiejew and Alessio Sammartano for useful discussions. Moreover, we thank Daniele Faenzi, Martijn Kool, and Reinier F. Schmiermann for the interest shown.

	\section{Preliminaries}\label{sec:preliminaries}
	In this section we recall some well known results and we settle some notation.
	
	Let $Z\hookrightarrow \A^n$ be a closed subscheme defined by the ideal $I_Z\subset\C[x_1,\ldots,x_n]$. Recall that the \textit{dimension} of $Z$ is defined as the Krull dimension (see \cite[Chapter II]{EISENBUD} for more details on dimension theory) of the ring $\Calo_Z=\C[x_1,\ldots,x_n]/I_Z$. When $\Calo_Z$ is zero-dimensional as a ring, then  it is a semilocal artinian $\C$-algebra of finite-type and, as a consequence, it is a finite dimensional vector space over the complex numbers. The complex dimension of $\Calo_Z$ is called the \textit{length} of $Z$ or the \textit{colength} of $I_Z$,
	\[
	\len Z=\colen I_Z = \dim_\C \Calo_Z.
	\]
	When $Z$ is a zero-dimensional closed subscheme of $\A^n$ and $\Calo_Z$ is a local $\C$-algebra we will say that $Z$ is a \textit{fat point}.
	
	Let $d$ be a positive integer, and let $X$ be a smooth quasi-projective variety. Recall that the \textit{Hilbert functor} of $d$ points in $X$ is the association $\underline{\Hilb}^d X: \textnormal{Schemes}^{op} \to \textnormal{Sets}$ defined by
	\[
	\left(\underline{\Hilb}^d X\right)(S) = \Set{ \mathcal Z \hookrightarrow X\times S \mbox{ closed subscheme} | \mathcal Z \to S \mbox{ is flat   and finite of degree }d}.
	\]
	
	By a celebrated result of Grothendieck, the functor $\underline{\Hilb}^d X$ is representable  and the fine moduli space $\Hilb^dX$ representing it is a quasi-projective scheme called \textit{Hilbert scheme} \cite{FGA}. As there is a bijection between closed subschemes $Z$ and their ideal sheaves $I_Z$, we will denote points of the Hilbert scheme by $[Z]$ or $[I_Z]$.
	
	Although it is not known, in general, if the Hilbert scheme $\Hilb^d \A^3$ is irreducible, there is a component which can always be defined. Precisely, the \emph{smoothable component} is defined as the closure of the open subscheme $U\subset\Hilb^d\A^3$ parametrising closed and reduced zero-dimensional subschemes of length $d$ of $\A^3$.
	
	\begin{definition}
		A point $[Z]\in \Hilb^d\A^3$ is \textit{smoothable} if it belongs to the smoothable component.
	\end{definition}

	We report now the description of the tangent space of $\Hilb^dX$ in terms of first order deformations. Let $\C[\epsilon] = \C[t]/(t^2)$ be the ring of \textit{dual numbers}, where $\epsilon$ stands for the equivalence class of $t$ in the quotient ring, and let $D = \Spec\, \C[\epsilon]$ be its spectrum. A \emph{first order deformation} of a scheme $Z\subset \A^n$ is a commutative diagram
	\begin{center}
		\begin{tikzpicture}[xscale=2,yscale=1.2]
			
			\node (A) at (0,0) [] {$Z$};
			\node (B) at (1,0) [] {$\mathcal{D}$};
			\node (C) at (0,-1) [] {$\Spec \C$};
			\node (D) at (1,-1) [] {$D$};
			\draw [->] (A) -- (B);
			\draw [->] (A) -- (C);
			\draw [->] (B) --node[right]{\tiny $f$} (D);
			\draw [right hook->] (C) -- (D);
		\end{tikzpicture}
	\end{center}
	where $f$ is a flat morphism, such that the induced morphism $Z \to \mathcal{D} \times_{D} \Spec \C$ is an isomorphism. As a consequence of the universal property of the Hilbert scheme, we can associate to each first order deformation of $Z$ a morphisms of schemes $D \to \Hilb^d \A^n$ mapping the unique closed point of $D$ to $[I_Z]$. We denote the collection of these morphisms by $\textnormal{Hom}_{I_Z}(D, \Hilb^d \A^n)$.
	Recall that this set has a canonical structure of vector space and it is canonically isomorphic to the tangent space $\mathsf{T}_{[I_Z]} \Hilb^d\A^n$.
	The following result gives a different characterisation of the tangent space of the Hilbert scheme at a given point $[I]\in \Hilb^d\A^n$.
	
	\begin{theorem}[{\cite[Corollary~6.4.10]{FGAexplained}}]
		Let $[I]\in \Hilb^d\A^n$ be any point and let $\mathsf{T}_{[I]}\Hilb^{d}\A^n$ denote the tangent space of $  \Hilb^{d}\A^n$ at $[I]$. Then,
		\[ 
		\mathsf T_{[I]}\Hilb^{d}\A^n \simeq \Hom_{\C[x_1,\ldots,x_n]}(I,\C[x_1,\ldots,x_n]/I) \simeq \textnormal{Hom}_{I}(D, \Hilb^d \A^n).\]
	\end{theorem}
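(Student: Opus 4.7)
The plan is to prove the two isomorphisms separately. The second isomorphism $\mathsf T_{[I]}\Hilb^{d}\A^n \simeq \textnormal{Hom}_{I}(D, \Hilb^d \A^n)$ is essentially tautological once one accepts the general fact that the tangent space to a $\C$-scheme $Y$ at a closed point $y$ is canonically identified with the set of morphisms $D \to Y$ sending the unique closed point of $D$ to $y$. Granted this, the discussion preceding the theorem explains that a first order deformation of $Z$ gives such a morphism via the universal property of $\Hilb^d\A^n$, and conversely every morphism $D \to \Hilb^d\A^n$ pulls back the universal family to produce a first order deformation. So the work lies in the first isomorphism.

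To prove $\textnormal{Hom}_{I}(D, \Hilb^d \A^n) \simeq \Hom_{R}(I, R/I)$, where $R = \C[x_1,\ldots,x_n]$, I would unwind the functor of points. A morphism $D \to \Hilb^d\A^n$ centered at $[I]$ corresponds, via the Hilbert functor evaluated at $D = \Spec \C[\epsilon]$, to an ideal $\widetilde I \subset R[\epsilon]$ such that $R[\epsilon]/\widetilde I$ is flat of degree $d$ over $\C[\epsilon]$ and restricts to $R/I$ modulo $\epsilon$. The heart of the proof is thus to set up a bijection
\[
\bigl\{\widetilde I \subset R[\epsilon] \text{ as above}\bigr\} \;\longleftrightarrow\; \Hom_R(I, R/I).
\]

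The key step is the flatness criterion over the dual numbers: a $\C[\epsilon]$-module $M$ is flat if and only if multiplication by $\epsilon$ induces an injection $M/\epsilon M \hookrightarrow M$, equivalently the sequence $0 \to R/I \xrightarrow{\epsilon} R[\epsilon]/\widetilde I \to R/I \to 0$ is short exact. Using this, I would build the correspondence as follows. Given $\widetilde I$, each $f \in I$ lifts to an element $f + \epsilon g_f \in \widetilde I$ for some $g_f \in R$; the lift is unique modulo $\epsilon I$, because the kernel $\ker(\widetilde I \twoheadrightarrow I)$ coincides with $\epsilon I$ by flatness. Hence $f \mapsto g_f \bmod I$ is a well-defined map $\varphi_{\widetilde I}: I \to R/I$, and $R$-linearity together with respect for relations in $I$ follows from $\widetilde I$ being an ideal (closed under $R[\epsilon]$-multiplication) and from $R/I$-linearity being forced on the $\epsilon$-coefficients.

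Conversely, given $\varphi: I \to R/I$, I would choose any set-theoretic lift $\widetilde\varphi: I \to R$ and define $\widetilde I = \{f + \epsilon\, \widetilde\varphi(f) : f \in I\} + \epsilon I \subset R[\epsilon]$. The subtle point to verify is that this is actually a $\C[\epsilon]$-submodule closed under $R[\epsilon]$-multiplication: closure under $R$-multiplication is immediate by $R$-linearity of $\varphi$ (the $\epsilon$-coefficient error lies in $I$, hence is absorbed by the $\epsilon I$ summand), and closure under multiplication by $\epsilon$ is automatic. The flatness criterion is then direct, since $R[\epsilon]/\widetilde I$ fits into the required short exact sequence by construction. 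Finally one checks that the two constructions are mutually inverse and $\C$-linear, yielding the desired isomorphism of vector spaces.

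The main obstacle is the bookkeeping in the last step: making sure that the ideal $\widetilde I$ constructed from $\varphi$ is independent of the choice of lift $\widetilde \varphi$ (up to the same $\widetilde I$), and that both directions of the correspondence are genuinely $R$-linear rather than merely additive. Everything else reduces to a careful application of the flatness criterion over $\C[\epsilon]$.
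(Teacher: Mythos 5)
The paper does not prove this statement: it is quoted verbatim from \cite[Corollary~6.4.10]{FGAexplained}, so there is no internal proof to compare against. Your argument is the standard one (and is essentially what appears in the cited reference and in every treatment of first-order deformation theory).

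Your two steps are both sound. The reduction to $\Hom_I(D,\Hilb^d\A^n)$ is indeed the general identification of the Zariski tangent space of a scheme with $D$-points centered at the given point, combined with the universal property of the Hilbert functor. The heart of your argument — the bijection between ideals $\widetilde I\subset R[\epsilon]$ flat over $\C[\epsilon]$ with special fiber $I$ and $R$-linear maps $\varphi\colon I\to R/I$ — is carried out correctly: you correctly use that flatness over $\C[\epsilon]$ is equivalent to $\widetilde I\cap\epsilon R[\epsilon]=\epsilon I$ (so that the $\epsilon$-coefficient $g_f$ in a lift $f+\epsilon g_f\in\widetilde I$ is well defined modulo $I$ and depends $R$-linearly on $f$), and conversely that the set $\{f+\epsilon\widetilde\varphi(f):f\in I\}+\epsilon I$ is an $R[\epsilon]$-ideal (the errors produced by a set-theoretic lift $\widetilde\varphi$ landing in $I$, hence absorbed by $\epsilon I$) whose quotient is visibly $\C[\epsilon]$-flat and independent of the lift. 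Two small points you wave at but should make explicit if you write this up fully: the quotient $R[\epsilon]/\widetilde I$ you construct is automatically finite of the right degree (it has $\C$-length $2d$ by the short exact sequence, so it is free of rank $d$ over $\C[\epsilon]$); and the $\C$-linearity of the correspondence is verified against the usual vector-space structure on $D$-points (scalar $\lambda$ acts by precomposing with $\epsilon\mapsto\lambda\epsilon$, addition via the fibered coproduct of two copies of $D$). Neither is a gap, just bookkeeping you flagged rather than carried out.
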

	
	In Section \ref{sec:mainresults} we give an explicit description of the tangent space $\mathsf T_{[I]}\Hilb^{d}\A^n$ for an ideal $I$ disproving the parity conjecture. Our main tools are the \textit{marked bases}\footnote{The theory of marked bases has been introduced for homogeneous ideals and the projective setting. The affine version we refer to has been developed in \cite{BertoneCioffiRoggero}. For the zero-dimensional case, see also \cite{BertoneCioffi}.} (see \cite{CioffiRoggero,BertoneCioffiLellaRoggero,BertoneLellaRoggero,LellaRoggero,CeriaMoraRoggero,BertoneCioffiRoggero} and references therein). We recall here the main definitions and properties and we postpone more details to Appendix \ref{APPENDICE}.
	
	In this paper, we describe monomials in $\C[x_1,\ldots,x_n]$ via the standard multi-index notation. Namely, for any $\underline{\alpha} =(\alpha_1,\ldots,\alpha_n)\in \mathbb{Z}_{\geqslant 0}^{n}$, the symbol $x^{\underline{\alpha}}$ stands for $x_1^{\alpha_1} \cdots x_n^{\alpha_n}$. 
	
	A set of monomials $\mathcal{N}$ in $\C[x_1,\ldots,x_n]$ is called \emph{order ideal} if it is closed under subdivision, i.e.
	\[
	x^{\underline{\alpha}} \in \mathcal{N}\text{~and~}x^{\underline{\beta}} \mid x^{\underline{\alpha}}\quad\Rightarrow\quad x^{\underline{\beta}} \in \mathcal{N}.
	\]
	The monomials in the complementary set of $\mathcal{N}$ generate a monomial ideal in $\C[x_1,\ldots,x_n]$ that we denote by $J_{\mathcal{N}}$. Notice that $J_{\mathcal{N}}\cap \mathcal{N}=\emptyset$. 
	
	If the order ideal is finite, then the Krull dimension of the quotient ring $\C[x_1,\ldots,x_n]/J_{\mathcal{N}}$ is 0 and  $\mathcal{N}$ is the unique monomial basis of $\C[x_1,\ldots,x_n]/J_{\mathcal{N}}$ as $\C$-vector space, that is $J_{\mathcal{N}}$ defines a zero-dimensional scheme in $\A^n$ of length $\vert \mathcal{N}\vert$.
	
	\begin{definition}[{\cite[Definition 6.2]{BertoneCioffiRoggero}}]
		Let $\mathcal{N} \subset \C[x_1,\ldots,x_n]$ be a finite order ideal. The marked family functor associated to $\mathcal{N}$ is the covariant functor $\underline{\Mf}_\mathcal{N} : \C\text{-Algebras} \to \text{Sets}$ defined by
		\begin{equation}
			\underline{\Mf}_{\mathcal{N}}(A) = \left\{ I \subset A[x_1,\ldots,x_n]\ \middle\vert\ A[x_1,\ldots,x_n] = I \oplus \langle \mathcal{N}\rangle\right\}. 
		\end{equation}
	\end{definition}
	
	The marked family functor $\underline{\Mf}_{\mathcal{N}}$ is a subfunctor of the Hilbert functor, since for every ideal $I\in\underline{\Mf}_{\mathcal{N}}(A)$, the quotient algebra $A[x_1,\ldots,x_n]/I$ turns out to be a free $A$-module with a monomial basis given by the order ideal $\mathcal{N}$. Moreover, the functor $\underline{\Mf}_{\mathcal{N}}$ is an open representable subfunctor of the Hilbert functor \cite[Theorem 6.6, Proposition 6.13]{BertoneCioffiRoggero}. Hence, the fine moduli space $\Mf_{\mathcal{N}}$ representing this functor is an open subscheme of $\Hilb^{\vert\mathcal{N} \vert}\A^n$.

	The collection of schemes 
	\[
	\Set{\Mf_{\mathcal{N}}|\mathcal N\subset \C[x_1,\ldots,x_n] \mbox{ is an order ideal of cardinality }d}
	\]
	is an atlas of $\Hilb^d \A^n$ \cite[Proposition 5]{BertoneCioffi}. Thus, given a point $[I] \in \Hilb^d \A^n$, we can always compute $\mathsf T_{[I]}\Hilb^{d}\A^n$ as  $\mathsf T_{[I]}\Mf_{\mathcal{N}} \simeq \textnormal{Hom}_{I}(D, \Mf_{\mathcal{N}})$ for a suitable order ideal $\mathcal{N}$.

	Since we work with closed zero-dimensional subschemes of $\A^3$, we fix once for all the ring $R$ to be the polynomial ring in three variables and complex coefficients $R=\C[x,y,z]$ and the ideal $\mathfrak m\subset R$ to be the unique maximal homogeneous ideal $\mathfrak m=(x,y,z)$. Moreover, we denote the $k$-th graded piece of a graded module $M$ by $M_k$. For instance, we have
	\[R_k=\Set{p\in \C[x,y,z] | p \mbox{ is homogeneous of degree } \deg p=k  }\cup \Set{0}. \]
	
	We conclude this section by warning that, with abuse of notation, we denote both the elements of $R$ and the elements of $R/I$ in the same way, tacitly assuming that we are choosing representatives.
	
	\section{Main results}\label{sec:mainresults}

	In the following, given a subspace $W$ of a vector space $V$ we refer to any subspace $U$ of $V$ such that $V=W \oplus U$ as a complement of $W$.
	
	\begin{theorem}\label{thmA}
		Let $\ell\in R_1\smallsetminus \Set{0}$ be a nonzero linear form and let $I\subset R $ be an ideal of the form \begin{equation}\label{eq:generic-counterexample}
			I = ((\ell)+\mathfrak m^2)^2 +(v)  ,
		\end{equation}
		where, $v\in (\Sym^3 (L) +\ell \cdot L)\smallsetminus \Set{0}$ for some $L\in\Gr(2,R_1)$ complement of $\langle \ell \rangle$. Then, $I$ has colength $\colen I= 12$ and, for a general choice of $v$, we have
		\[ 
		\dim_\C \mathsf T_{[I]}\Hilb^{12}\A^3 = 45.
		\]
	\end{theorem}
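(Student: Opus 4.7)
The plan is to exploit the $\Gl(R_1)$-symmetry of the construction to reduce to a normal form, to check the colength by a short direct computation, and to extract the tangent space dimension from a marked-basis presentation via the formalism recalled in Section~\ref{sec:preliminaries} (and spelt out in Appendix~\ref{APPENDICE}).

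I would begin with a change of coordinates. Both $\colen I$ and $\dim_\C \mathsf T_{[I]}\Hilb^{12}\A^3$ are invariant under the natural $\Gl(R_1)$-action on $\A^3$, which sends an ideal of the form \eqref{eq:generic-counterexample} to another of the same form, with the pair $(\ell,L)$ replaced by its transform. I may therefore assume $\ell=x$ and $L=\langle y,z\rangle$, in which case
\[
v=ay^3+by^2z+cyz^2+dz^3+exy+fxz,\qquad (a,b,c,d,e,f)\in\C^6\smallsetminus\{0\}.
\]
For the colength I would expand
\[
((\ell)+\mathfrak m^2)^2=(x,y^2,yz,z^2)^2=(x^2,xy^2,xyz,xz^2,y^4,y^3z,y^2z^2,yz^3,z^4),
\]
a monomial ideal whose quotient has the standard monomial basis $\{1,x,y,z,xy,xz,y^2,yz,z^2,y^3,y^2z,yz^2,z^3\}$ of cardinality $13$. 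Since $v\in (\ell)+\mathfrak m^2$, one has $v\cdot\mathfrak m\subset((\ell)+\mathfrak m^2)^2$, so the image of $v$ in this quotient spans a one-dimensional subspace, nonzero because $v$ itself is supported on the above basis. Adding $v$ therefore removes exactly one dimension, giving $\colen I=12$.

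For the tangent space, for generic $v$ one has $a\neq 0$ and the set
\[
\mathcal N=\{1,x,y,z,xy,xz,y^2,yz,z^2,y^2z,yz^2,z^3\}
\]
is an order ideal whose image in $R/I$ is a $\C$-basis. Hence $[I]\in\Mf_{\mathcal N}$ and $\mathsf T_{[I]}\Hilb^{12}\A^3\simeq\mathsf T_{[I]}\Mf_{\mathcal N}$. The monomial ideal $J_{\mathcal N}$ is minimally generated by $x^2,y^3,xy^2,xyz,xz^2,y^2z^2,yz^3,z^4$, and the marked basis of $I$ consists of $x^2, xy^2, xyz, xz^2, y^2z^2, yz^3, z^4$ together with $a^{-1}v=y^3+a^{-1}(by^2z+cyz^2+dz^3+exy+fxz)$, whose tail lies in $\langle\mathcal N\rangle$. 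With this description, $\mathsf T_{[I]}\Mf_{\mathcal N}$ becomes the kernel of a concrete linear system in the first-order perturbations of the marked-basis coefficients, produced by the Buchberger-style reductions of the syzygies of $J_{\mathcal N}$ as in Appendix~\ref{APPENDICE}.

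The main obstacle is the effective computation of the dimension of this kernel. Carried out over the function field $\C(a,b,c,d,e,f)$, the linear system has a generic kernel dimension which gives $\dim \mathsf T_{[I]}\Hilb^{12}\A^3$ on a non-empty Zariski open subset of $\C^6\smallsetminus\{0\}$, and the claim is that this generic value equals $45$. I would delegate this rank computation to a computer algebra system, along the lines of the scripts referenced in Appendix~\ref{APPENDICE}, and cross-check it against the baby instance $v=y^3-xz$ from \eqref{eq:baby-example}, which already yields $\dim \mathsf T_{[I]}\Hilb^{12}\A^3=45$ and pins down the generic value from above via upper semicontinuity of the tangent space dimension.
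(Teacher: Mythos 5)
Your proposal follows essentially the same route as the paper: normalize by $\Gl(3,\C)$ to $\ell=x$, $L=\langle y,z\rangle$; compute $\colen J = 13$ for $J=((\ell)+\mathfrak m^2)^2$ and deduce $\colen I = 12$; identify the same order ideal $\mathcal{N}$ and quasi-stable ideal $J_{\mathcal N}$; and extract $\dim_\C \mathsf T_{[I]}\Hilb^{12}\A^3 = 45$ from the marked-basis linear system over the function field, delegated to a CAS exactly as in Appendix~\ref{APPENDICE}. The paper's actual proof identifies in addition the discriminant $\mathfrak B=b_3b_4^3-b_2b_4^2b_5+b_1b_4b_5^2-b_5^3$ cutting out the non-generic locus, which you do not extract but which is not needed for the statement as phrased (``for a general choice of $v$'').

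One small slip: you justify $v\cdot\mathfrak m\subset ((\ell)+\mathfrak m^2)^2$ by ``$v\in(\ell)+\mathfrak m^2$'', but that implication is false as stated — for instance $y^2\in(\ell)+\mathfrak m^2$ while $y^2\cdot y = y^3\notin ((\ell)+\mathfrak m^2)^2$. The correct reason is the stronger inclusion $v\in\Sym^3(L)+\ell L\subset\big((\ell)+\mathfrak m^2\big)\cdot\mathfrak m$ (indeed $\Sym^3(L)\subset\mathfrak m^3$ and $\ell L\subset(\ell)\mathfrak m$), whence $v\mathfrak m\subset\big((\ell)+\mathfrak m^2\big)\mathfrak m^2\subset\big((\ell)+\mathfrak m^2\big)^2$ using $\mathfrak m^2\subset(\ell)+\mathfrak m^2$. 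With that replacement the colength argument is complete; in fact it is slightly more careful than the paper's, which only observes $v\notin J$ without spelling out why $(v)+J = \C v + J$.
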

	
	\begin{proof}
		Without loss of generality, we can suppose $\ell=x$ and $L=\langle y,z\rangle $.
		Under these assumptions, the ideal $I$ takes the following form
		\[
		I=((x)+(y,z)^2)^2+\big(b_0y^3+b_1y^2z+b_2yz^2+b_3z^3+b_4xy+b_5xz\big),
		\]
		for some $[b_0:\cdots:b_5]\in\Pj^5$.
		
		In order to prove the first part of the statement,  first notice that the ideal 
		\[J=((x)+(y,z)^2)^2 \]
		has colength $\colen J=13$. Now, we also have
		\[ b_0y^3+b_1y^2z+b_2yz^2+b_3z^3+b_4xy+b_5xz\notin J\]
		which implies $\colen I=12$ for all $I$ of the form \eqref{eq:generic-counterexample}.
		
		We move now to the computation of the tangent space $ \mathsf T_{[I]}\Hilb^{12}\A^3$ for a general $I$ of the form \eqref{eq:generic-counterexample}. By generality of $v$, we set $b_0=1$ and the ideal $I$ is minimally generated by
		\[
		x^2,\quad  xy^2, \quad xyz,\quad xz^2,\quad y^2z^2,\quad yz^3,\quad z^4, \quad v = y^3 + b_1y^2z+b_2yz^2+b_3z^3+b_4xy+b_5xz.
		\]
		Thus, the quotient algebra $\C[x,y,z]/I$ admits the monomial basis
		\[
		\mathcal{N} = \left\{ y^2z,yz^2,z^3,xy, y^2, xz, yz, z^2, x, y, z, 1\right\}
		\]
		and $[I] \in \Mf_{\mathcal{N}}$.
		We can compute the tangent space $\mathsf T_{[I]}\Hilb^{12}\A^3$ as $\mathsf T_{[I]}\Mf_{\mathcal{N}} \simeq \textnormal{Hom}_I(D,\Mf_{\mathcal{N}})$. If the polynomial $\mathfrak B:= b_{3}b_{4}^{3}-b_{2}b_{4}^{2}b_{5}+b_{1}b_{4}b_{5}^{2}-b_{5}^{3}$ is different from zero, the first order deformations are described by the ideal generated by the following polynomials
		\begin{equation}\label{eq:TS}
			\begin{split}
				x^2 &{} + \epsilon_{1}\, y^2z+ \epsilon_{2}\, y z^2+ \epsilon_{3}\, z^3 + \epsilon_{4}\, x y+ \epsilon_{5}\, x z+2  \epsilon_{10} x \\
				x y^2&{} + \epsilon_{6}\, y^2z+ \epsilon_{7}\, y z^2+ \epsilon_{8}\, z^3+ \epsilon_{9} \, x y+ \epsilon_{10}\, y^2+ \epsilon_{11}\, x z \\
				x y z &{} + \epsilon_{12}\, y^2z+ \epsilon_{13}\, y z^2+ \epsilon_{14}\, z^3+ \epsilon_{15}\, x y+ \epsilon_{16}\, x z+ \epsilon_{10}\, y z\\
				x z^2 &{} + \epsilon_{17}\, y^2z+ \epsilon_{18}\, y z^2+ \epsilon_{19}\, z^3+ \epsilon_{20}\, x y+ \epsilon_{21}\, x z+ \epsilon_{10}\, z^2\\
				y^2z^2 &{} + \epsilon_{22}\, y^2z+ \epsilon_{23}\, y z^2+ \epsilon_{24}\, z^3+ \epsilon_{25}\, x y+ \epsilon_{26}\, x z\\
				y z^3 &{} + \epsilon_{27}\, y^2z+ \epsilon_{28}\, y z^2+ \epsilon_{29}\, z^3+ \epsilon_{30}\, x y+ \epsilon_{31}\, x z\\
				z^4&{} + \epsilon_{32}\, y^2z+ \epsilon_{33}\, y z^2+ \epsilon_{34}\, z^3+ \epsilon_{35}\, x y+ \epsilon_{36}\, x z\\
				v &{} + \epsilon_{37}\, y^2z + \epsilon_{38}\, y z^2 + \epsilon_{39}\, z^3+ \epsilon_{40}\, x y+ \epsilon_{41}\, y^2+ \epsilon_{42}\, x z+ \epsilon_{43}\, y z+ \epsilon_{44}\, z^2 + {} \\[-4pt] & {} + \epsilon_{45}\, x+b_{4} \epsilon_{10}\, y+b_{5} \epsilon_{10}\, z
			\end{split}
		\end{equation}
		in $\C[\epsilon_1,\ldots,\epsilon_{45}][x,y,z]$, where $\epsilon_i$ is the equivalence class of $t_i$ in $\C[t_1,\ldots,t_{45}]/(t_1,\ldots,t_{45})^2$ (see Appendix \ref{subsec:counterexample} for the computational details)\footnote{Notice that in the appendix the polynomial $v$ is denoted by $f_{y^3}$.}. Thus, we have
		\[\dim_\C \mathsf T_{[I]}\Hilb^{12}\A^3 = 45.\qedhere\]
	\end{proof}
	\begin{corollary}\label{cor:conjecture}
		The parity conjecture (Conjecture \ref{conjecture}) is false for any $d\geqslant 12$.
	\end{corollary}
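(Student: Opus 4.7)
The plan is to deduce the statement for arbitrary $d \geqslant 12$ from the special case $d = 12$, which is already handled by Theorem \ref{thmA}: that theorem exhibits an ideal $I$ of colength $12$ with $\dim_\C \mathsf T_{[I]}\Hilb^{12}\A^3 = 45$, and since $45 \not\equiv 12 \pmod 2$, the conjecture already fails at $d = 12$. For larger $d$ my approach is the standard bootstrapping by disjoint reduced points: let $Z \hookrightarrow \A^3$ be the length-$12$ subscheme cut out by $I$, pick $d - 12$ distinct closed points $p_1, \ldots, p_{d-12}$ of $\A^3$ whose supports avoid that of $Z$, and form the length-$d$ subscheme $Z' = Z \sqcup \{p_1\} \sqcup \cdots \sqcup \{p_{d-12}\}$.

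The key ingredient I would then invoke is the additivity of the tangent space of the Hilbert scheme across pairwise disjoint zero-dimensional components: a neighbourhood of $[Z']$ in $\Hilb^d \A^3$ is étale-locally isomorphic to the product of a neighbourhood of $[Z]$ in $\Hilb^{12}\A^3$ with $d-12$ copies of $\A^3 \simeq \Hilb^1 \A^3$. Passing to tangent spaces yields
\[
\dim_\C \mathsf T_{[Z']}\Hilb^d\A^3 = \dim_\C \mathsf T_{[Z]}\Hilb^{12}\A^3 + 3(d-12) = 3d + 9,
\]
and since $3d + 9 \equiv d + 1 \pmod 2$, the parity condition of Conjecture \ref{conjecture} is violated for every $d \geqslant 12$.

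There is no serious obstacle here: the only non-trivial input is Theorem \ref{thmA}, which has already been established. Should one prefer a purely algebraic justification for the tangent-space additivity, it follows from the Chinese remainder decomposition $R/I_{Z'} \simeq R/I \times \prod_i R/\mathfrak m_{p_i}$, which induces a direct-sum splitting of $\Hom_R(I_{Z'}, R/I_{Z'})$ whose factor at $p_i$ is $\mathsf T_{p_i}\A^3$, of dimension $3$.
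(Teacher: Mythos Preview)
Your proof is correct and follows exactly the paper's approach: invoke Theorem~\ref{thmA} for $d=12$ and bootstrap to larger $d$ by adjoining $d-12$ distinct reduced points disjoint from the support of the length-$12$ counterexample. The paper states this in a single sentence without writing out the tangent-space computation, but your elaboration via the \'etale-local product structure (or equivalently the Chinese remainder decomposition of $\Hom_R(I_{Z'},R/I_{Z'})$) is the standard justification and is entirely sound.
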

	\begin{proof}
		For $d=12$ a counterexample is provided by Theorem \ref{thmA}. The case $d= 12+k$ with $k\geqslant 1$ is treated by adding a $k$-tuple of distinct points disjoint from the support of the counterexample of length 12.
	\end{proof}
 
    \begin{remark}[{cf.~\cite[Remark 11, Example 12]{SAMMARTANO}}] In \cite{SAMMARTANO}, the authors prove that the parity conjecture holds for any ideal homogeneous with respect to a grading taking values in a torsion-free abelian
group such that $\deg(x) + \deg(y) + \deg(z)$ is not divisible by 2. Our example \eqref{eq:baby-example} is in fact homogeneous with respect to the grading $\deg:R \to \mathbb{Z}^2$ defined by $\deg(x) = (3,0)$, $\deg(y) = (1,1)$ and $\deg(z) = (0,3)$. However, $\deg(x) + \deg(y) + \deg(z) = 2(2,2)$.
    \end{remark}
    
	\begin{remark}\label{rem:other-cexamples}
		It is worth mentioning that with the help of the computer we are able to produce many counterexamples apart from the one in the proof of Corollary \ref{cor:conjecture}. For instance, the following is a counterexample of length 78 and tangent space of complex dimension 263.
		\[
		\begin{aligned}
			I=(&y^2z^4,x^2z^4+z^6,y^3z^3,x^3z^3,xy^3z^2,xy^4z+x^3yz^2-y^2z^3,x^4yz-z^6+y^3z^2,\\
   &x^3y^3-x^2yz^2-yz^4, x^5z+x^3y^2z+xyz^4-x^2z^3-z^5,y^6+x^4z^2+xy^2z^3+xz^5,\\
   &x^6-y^4z-xz^4,y^5z^2+xyz^5,x^2y^5+xyz^5+z^7,\\
   &x^5y^2+x^3y^2z^2+xyz^5+x^2y^4-y^4z^2+z^6-xy^2z^2).
		\end{aligned}
		\]
		In particular it is not of the form described in the proof of Corollary \ref{cor:conjecture}.
	\end{remark}  
	\begin{corollary}\label{cor:afffamily}
		The ideal 
		\[
		\mathcal I_x =((x+a_1 y+a_2 z)+(y,z)^2)^2+\big(b_0y^3+b_1y^2z+b_2yz^2+b_3z^3+b_4xy+b_5xz\big)\subset R[a_1,a_2][b_0,\ldots,b_5]
		\]
		defines a family $\mathcal Z_{\mathcal{I}_x} \subset \A^3\times \A^2\times \Pj^5$ of zero-dimensional subschemes of $\A^3$ flat over $\A^2\times \Pj^5$.
		
		Analogously, there are ideals $\mathcal{I}_y$ and $\mathcal{I}_z$ defining families $\mathcal Z_{\mathcal{I}_y},\mathcal Z_{\mathcal{I}_z} \subset \A^3\times \A^2\times \Pj^5$ of zero-dimensional subschemes of $\A^3$ flat over $\A^2\times \Pj^5$.
	\end{corollary}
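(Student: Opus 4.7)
The plan is to exhibit, on a suitable open cover of $\A^2\times\Pj^5$, the ideal $\mathcal I_x$ as a marked family with respect to an order ideal of cardinality 12, whence the desired flatness follows from the open immersion $\Mf_{\mathcal N}\hookrightarrow\Hilb^{12}\A^3$ recalled in Section~\ref{sec:preliminaries}.

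The first step is to cover $\Pj^5$ by the six principal affine opens $U_i=\{b_i\neq 0\}$, $i=0,\ldots,5$, and on each $U_i\times\A^2$ to dehomogenise by setting $b_i=1$; let $A_i$ denote the resulting coordinate ring. To each chart I would attach an order ideal $\mathcal N_i\subset R$ of cardinality $12$ whose complementary monomial ideal $J_{\mathcal N_i}$ has the leading monomial of $v$ on $U_i$ among its minimal generators. On $U_0$ the natural choice is $\mathcal N_0=\{1,x,y,z,xy,y^2,xz,yz,z^2,y^2z,yz^2,z^3\}$, which already featured in the proof of Theorem~\ref{thmA}; the other $\mathcal N_i$ arise from the symmetric substitutions corresponding to promoting $y^2z$, $yz^2$, $z^3$, $xy$, or $xz$ to the role of leading monomial.

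The second step is to rewrite the ten natural generators $\ell^2,\ell y^2,\ell yz,\ell z^2,y^4,y^3z,y^2z^2,yz^3,z^4,v$ of $\mathcal I_x|_{U_i\times\A^2}$ as a finite system $\{G_1,\ldots,G_m\}$ whose tails all lie in $\langle\mathcal N_i\rangle_{A_i}$; this is possible because, once $b_i$ is inverted, $v$ can be used to eliminate its leading monomial from every tail, and the process terminates since $\mathfrak m^4\subset((\ell)+(y,z)^2)^2$ kills every sufficiently high-degree monomial. The key check is then the Buchberger-type criterion for marked bases from \cite[Theorem~6.6,~Proposition~6.13]{BertoneCioffiRoggero}: each $S$-pair of the $G_j$ must reduce to zero against $\{G_1,\ldots,G_m\}$ modulo $\langle\mathcal N_i\rangle_{A_i}$. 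Granting this, $\mathcal I_x|_{U_i\times\A^2}$ represents an element of $\underline{\Mf}_{\mathcal N_i}(A_i)$, equivalently a morphism $U_i\times\A^2\to\Mf_{\mathcal N_i}\subset\Hilb^{12}\A^3$, and the flat universal family pulls back to the flat family $\mathcal Z_{\mathcal I_x}|_{U_i\times\A^2}$. Because flatness is local on the base, gluing along the overlaps $U_i\cap U_j$ yields flatness on all of $\A^2\times\Pj^5$; the cases of $\mathcal I_y$ and $\mathcal I_z$ are handled identically after permuting the roles of $x,y,z$.

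The main obstacle will be the verification of the $S$-pair criterion on each chart. This is not a formal consequence of the leading-monomial discussion: the reductions depend on intricate identities among the parameters $a_1,a_2,b_0,\ldots,b_5$, and one must track how the tails of different $G_j$ interact under multiplication by $x,y,z$. I expect this verification to be a finite but tedious symbolic calculation, most efficiently discharged in a computer algebra system such as \emph{Macaulay2}, in the same spirit as the tangent space computation that underlies Theorem~\ref{thmA} (see Appendix~\ref{APPENDICE}).
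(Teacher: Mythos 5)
Your plan---cover $\Pj^5$ by the principal affine opens, realise $\mathcal I_x$ on each chart as a marked family with respect to a fixed order ideal, and invoke freeness of marked families to conclude flatness---is internally consistent and in the same spirit as the computations in Appendix~\ref{APPENDICE}, but if you actually attempted the key verification you would find that it fails. The obstacle is not that the reductions are intricate; it is that the fibres of $\mathcal Z_{\mathcal I_x}$ do \emph{not} all have colength $12$ when the generator is taken literally as displayed. Take $a_1=1$, $a_2=0$ and $[b_0:\cdots:b_5]=[0:0:0:0:1:0]$, so $\ell=x+y$ and $v=xy$. Writing $x'=x+y$ one has $J:=((\ell)+(y,z)^2)^2=((x')+(y,z)^2)^2$, the quotient $R/J$ has length $13$, and $v=x'y-y^2$ generates in $R/J$ a $3$-dimensional ideal spanned by the classes of $x'y-y^2$, $y^3$ and $y^2z$; hence $\colen(J+(v))=10\neq 12$. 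In particular on the chart $U_4=\{b_4\neq 0\}$ there is no order ideal of cardinality $12$ for which the criterion of Theorem~\ref{thm:effective} can possibly hold at this fibre, so the step you describe as ``finite but tedious'' is where the argument breaks down. You should therefore not have ``granted this''---it is precisely the place to look.

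What is happening is that the displayed generator in the corollary is a chart-level misprint. Comparing with Remark~\ref{rem:LAMBDAell}, the ideals produced by $\gamma_\ell$ are of the form $(\ell+\mathfrak m^2)^2+(\overline p+\ell\overline q)$ with $\overline p\in\Sym^3(L)$ and $\overline q\in L$, so the correct affine realisation is $b_4(x+a_1y+a_2z)y+b_5(x+a_1y+a_2z)z$ in place of $b_4xy+b_5xz$. With this reading $v$ is a socle element of $R/J$ for every parameter value, and Theorem~\ref{thmA} (whose first part asserts $\colen I=12$ for \emph{all} such ideals, not only generic ones) applies uniformly. Once the generator is corrected, the intended proof is a one-liner that bypasses the marked-basis machinery entirely: every fibre of $\mathcal Z_{\mathcal I_x}$ has length $12$, and $\A^2\times\Pj^5$ is reduced and Noetherian, so constancy of the Hilbert polynomial along the fibres forces flatness. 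Your chart-by-chart marked-basis route would then also succeed and has the side benefit of exhibiting open subschemes $\Mf_{\mathcal N_i}$ through which the classifying morphism factors, but it is considerably more work than the statement requires, and in its present form the proposal both leaves the essential verification undone and overlooks the defect in the statement that the verification would have exposed.
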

	We move now to the study of the collection of ideals of the form \eqref{eq:generic-counterexample}.
	
	\begin{lemma}\label{lemmatech}
		Let us denote by $\Lambda_{\ell,L}$ the following set of ideals
		\[\Lambda_{\ell,L}=\Set{((\ell)+\mathfrak m^2)^2 +(v)|v\in(\Sym^3 (L) +\ell \cdot L)\smallsetminus\Set{0}}, \]
		where  $\ell\in R_1\smallsetminus \Set{0}$ is a nonzero linear form and $L\in\Gr(2,R_1)$ is a complement of $\langle\ell \rangle $.   Then, the set $\Lambda_{\ell,L}$ does not depend on the choice of $L$, i.e.  for any two complements $L,L'\in \Gr(2,R_1)$ of $\langle \ell\rangle $ we have 
		\[\Lambda_{\ell,L}=\Lambda_{\ell,L'}.\]
		We denote this set by $\Lambda_\ell$.
	\end{lemma}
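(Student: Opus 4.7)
My plan is to factor out the $L$-independent part of each ideal. I would set $J := ((\ell)+\mathfrak m^2)^2$, which depends only on $\ell$ (in fact only on the line $\langle \ell\rangle$); every member of $\Lambda_{\ell, L}$ then has the shape $J + (v)$ with $v$ in the subspace $U_L := \Sym^3(L) + \ell \cdot L \subset R$. Since $J + (v) = J + (v')$ whenever $v \equiv v' \pmod J$, it would suffice to establish (i) the image of $U_L$ in $R/J$ is independent of the choice of complement $L$, and (ii) the projection $U_L \to R/J$ is injective. Granted these, for any nonzero $v' \in U_{L'}$ one finds a unique nonzero $v \in U_L$ with $v \equiv v' \pmod J$, and thus $J + (v) = J + (v') \in \Lambda_{\ell, L}$; swapping the roles of $L$ and $L'$ yields the reverse inclusion.

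The key computation for (i) is a direct expansion. I would choose coordinates so that $\ell = x$ and $L = \langle y, z\rangle$; any complement of $\langle x\rangle$ in $R_1$ is then of the form $L' = \langle y + \alpha x, z + \beta x\rangle$ for some $\alpha, \beta \in \C$. A short expansion gives $J = (\ell^2) + \ell\,\mathfrak m^2 + \mathfrak m^4$, so $J$ contains $x^2$, the whole subspace $x\cdot\mathfrak m^2$, and $\mathfrak m^4$. When one expands a cubic $(y+\alpha x)^a(z+\beta x)^b$ with $a + b = 3$, every term involving $x$ either has $x^2$ as a factor (so lies in $(\ell^2)$) or has the form $x\cdot y^i z^j$ with $i + j = 2$ (so lies in $\ell\,\mathfrak m^2$); in either case it lies in $J$. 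Similarly $\ell(y+\alpha x) \equiv xy$ and $\ell(z+\beta x) \equiv xz$ modulo $J$. Hence $U_{L'}$ and $U_L$ project to the same $6$-dimensional subspace of $R/J$, namely the span of the classes of $y^3, y^2z, yz^2, z^3, xy, xz$.

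Injectivity in (ii) is then automatic from a dimension count. The space $U_L$ has dimension $6$ since $\Sym^3(L) \subset R_3$ and $\ell\cdot L \subset R_2$ lie in distinct homogeneous pieces and their sum is direct; and the six monomials $y^3, y^2z, yz^2, z^3, xy, xz$ appear in the standard monomial basis of $R/J$ read off from the generators $J = (x^2, xy^2, xyz, xz^2, y^4, y^3z, y^2z^2, yz^3, z^4)$, so they are linearly independent there. The one mildly delicate point I foresee is the bookkeeping that classifies which terms in the expansion of $(y+\alpha x)^a(z+\beta x)^b$ land in $(\ell^2)$ versus $\ell\,\mathfrak m^2$; beyond that, the argument reduces to elementary linear algebra.
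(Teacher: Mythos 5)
Your argument is correct and follows the same route as the paper: fix $\ell = x$, write a complement $L'$ of $\langle\ell\rangle$ as $\langle y+\alpha x, z+\beta x\rangle$, and observe that replacing $y,z$ by $y+\alpha x, z+\beta x$ in $b_0\ell_1^3+\cdots+b_5\ell\ell_2$ changes the generator only by elements of $J = ((\ell)+\mathfrak m^2)^2 = (\ell^2)+\ell\mathfrak m^2+\mathfrak m^4$, hence leaves the ideal unchanged. You have simply filled in the bookkeeping (the decomposition of $J$, the dimension count, and the injectivity of $U_L \to R/J$) that the paper leaves to the reader when it asserts the two ideals in Equation \eqref{eq:LL'} coincide.
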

	\begin{proof}
		Let $\ell\in R_1\smallsetminus \Set{0}$ be a nonzero linear form and let  $L,L'\in\Gr(2,R_1)$ be two complements of $\langle \ell\rangle$. Then, there are $\ell_1,\ell_2\in R_1$ linear forms and $(\alpha,\beta)\in\C^2$ such that
		\[\Set{\ell_1,\ell_2} \mbox{ and } \Set{\ell_1'=\ell_1+\alpha\ell,\ \ell_2'=\ell_2+\beta\ell} \]
		are bases of $L$ and $L'$ respectively.
		
		Now, the two ideals
		\begin{equation}\label{eq:LL'}
			\begin{matrix}
				\big((\ell) + \mathfrak m^2\big)^2 + \big(b_0\ell_1^3+b_1\ell_1^2\ell_2+b_2\ell_1\ell_2^2+b_3\ell_2^3+b_4\ell\ell_1+b_5\ell\ell_2\big)\\[1ex]
				\big((\ell) + \mathfrak m^2\big)^2 + \big(b_0\ell_1'^3+b_1\ell_1'^2\ell_2'+b_2\ell_1'\ell_2'^2+b_3\ell_2'^3+b_4\ell\ell_1'+b_5\ell\ell_2'\big)
			\end{matrix}
		\end{equation}
		are the same for any choice of $(\alpha,\beta)\in \C^2$. And this completes the proof of the lemma.
	\end{proof}
	\begin{remark}\label{rem:LAMBDAell}
		Let $\ell\in R_1\smallsetminus \Set{0}$ be a nonzero linear form. Let us consider $\Delta_\ell $  defined as follows
		\[\Delta_{\ell}= \Pj (\Sym^3 (R_1/\langle \ell\rangle) \oplus (\langle\ell\rangle \underset{\C}{\otimes} (R_1/\langle \ell\rangle)))\cong\Pj^5. \]
		Now, given a complement $L$ of $\langle\ell\rangle$, the restriction of the natural projection $\pi_\ell:R_1\rightarrow R_1/\langle \ell\rangle$ to $L$ induces a bijection $\gamma_\ell: \Delta_{\ell}\to \Lambda_{\ell}$. Precisely, we have
		\[
		\begin{tikzcd}[row sep=tiny]
			\Delta_\ell \arrow{r}{\gamma_\ell}&\Lambda_\ell\\ 
			{[}(p,\ell \otimes q){]} \arrow[mapsto]{r}&\left(\ell+\mathfrak m^2\right)^2+\left(\overline{p}+\ell \overline{q}\right)
		\end{tikzcd}
		\]
		where $\overline{p}=(\Sym\pi_{\ell})^{-1}(p)\cap \Sym (L)$ and $ \overline{q}=\pi_\ell^{-1}(q)\cap L$.
		Equation \eqref{eq:LL'} shows that this map does not depend on the choice of the complement $L$.
		
		We abuse notation and we interpret an element in $\Delta_{\ell}$ as the corresponding ideal in $\Lambda_{\ell}$.
		As a consequence of Theorem \ref{thmA}, the map $\gamma_\ell$ introduced in Remark \ref{rem:LAMBDAell} induces a morphism $\Delta_\ell\rightarrow \Hilb^{12}\A^3$ that, with abuse of notation, we denote by $\gamma_\ell$ as well.
		The image of $\gamma_\ell$ coincides with the image of $\{(a_1,a_2)\}\times \Pj^5$ under the classifying morphism induced by the family $\mathcal Z_{\mathcal{I}_x}$ from Corollary \ref{cor:afffamily}.
	\end{remark}
	
	We conclude this section showing that the ideals of the form \eqref{eq:generic-counterexample} are smoothable.
	
	\begin{proposition}\label{prop:smoothability}
		Any ideal $I\subset R$ of the form \eqref{eq:generic-counterexample} is smoothable.
	\end{proposition}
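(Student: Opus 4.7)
The plan is to exhibit $V(I)$ as a flat limit of a disjoint union of four fat points supported at distinct points of $\A^3$, following the picture of Figure~\ref{fig:collision}. The key observation is that, since the combined length is $12$, each of the four fat points has length at most $11$, hence lies in some $\Hilb^k\A^3$ with $k\leqslant 11$, which is irreducible \cite{Klemen,10points,JOACHIM} and therefore coincides with its smoothable component; so each individual fat point is smoothable. Their disjoint union, being supported at distinct points, is also smoothable, and smoothability of $V(I)$ will then follow from the closedness of the smoothable component inside $\Hilb^{12}\A^3$.

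Concretely, using Lemma~\ref{lemmatech} I first reduce to $\ell = x$ and $L = \langle y,z\rangle$, so that the family \eqref{eq:generic-counterexample} is parametrised by $b = [b_0:\cdots:b_5]\in\Pj^5$ in the form appearing in the proof of Theorem~\ref{thmA}. I would then construct an ideal $\mathcal J\subset R[t][a_1,a_2,b_0,\ldots,b_5]$, flat of relative length $12$ over $\C[t,a_1,a_2][b_0,\ldots,b_5]$, with the two properties that $\mathcal J|_{t=0}$ recovers the family $\mathcal I_x$ of Corollary~\ref{cor:afffamily}, and that for $t\neq 0$ the ideal $\mathcal J$ cuts out four disjoint fat points supported at the moving points $(0,\pm t,0)$ and $(\pm t^2,0,0)$. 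Checking flatness and identifying the fibres over $t\neq 0$ as disjoint unions of smaller-length fat points is a Gröbner-basis computation in the same spirit as those carried out for Theorem~\ref{thmA}.

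Granted such a family, for parameters in a dense open $U\subseteq \A^2\times\Pj^5$ the ideal $I(b)$ is a flat specialisation of a smoothable scheme and is therefore smoothable. To extend the conclusion to every member of the family \eqref{eq:generic-counterexample}, I would then invoke the irreducibility of $\overline{\Phi(\A^2\times\Pj^5)}$, where $\Phi\colon\A^2\times\Pj^5\to\Hilb^{12}\A^3$ is the classifying morphism induced by Corollary~\ref{cor:afffamily}: since $\Phi(U)$ is dense in $\overline{\Phi(\A^2\times\Pj^5)}$ and the smoothable component is closed in $\Hilb^{12}\A^3$, the entire image of $\Phi$ lies in the smoothable component, giving the desired smoothability for every ideal of the form \eqref{eq:generic-counterexample}.

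The hard part is the consistent choice of the four fat-point structures along the curves $(0,\pm t,0)$ and $(\pm t^2,0,0)$, so that their collision at $t=0$ reproduces the prescribed ideal $I(b)$ for every $b\in\Pj^5$. Once these local models are identified, the remaining verifications—that the total family has constant relative length $12$ and that the four fat points remain disjoint for $t\neq 0$—are straightforward but most naturally performed in a computer algebra system.
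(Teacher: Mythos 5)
Your strategy is the same as the paper's: reduce to $\ell = x$, degenerate a disjoint union of four fat points to $V(I)$ for generic parameters, observe that each component is smoothable because it has length $<12$ and that a disjoint union of smoothable schemes is smoothable, and then extend to all members of the family by closedness of the smoothable component. The gap is that you never construct the degeneration. You write ``I would then construct an ideal $\mathcal J\subset R[t][a_1,a_2,b_0,\ldots,b_5]$ \ldots'' and later concede that ``the hard part is the consistent choice of the four fat-point structures along the curves \ldots~so that their collision at $t=0$ reproduces the prescribed ideal $I(b)$ for every $b\in\Pj^5$.'' That hard part is the proof. The family exhibited in equation~\eqref{eq:smoothingDef} is a nontrivial lift of the first-order deformation sending $xyz \mapsto xyz + \epsilon\, y^2z$: its coefficients are rational functions with denominator $\mathfrak B = b_3b_4^3 - b_2b_4^2b_5 + b_1b_4b_5^2 - b_5^3$, it exists only on the locus $\mathfrak B \neq 0$ (which suffices, since smoothability is closed), and Appendix~\ref{subsec:smoothability} shows the same first-order direction is \emph{obstructed} when $\mathfrak B = 0$ and $b_4 \neq 0$, while for $b_4=b_5=0$ it lifts to a family with irreducible generic fibre. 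So the existence of a suitable degeneration cannot simply be asserted and outsourced to a CAS; finding one is delicate.

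A further small inaccuracy: the support points $(0,\pm t,0)$ and $(\pm t^2,0,0)$ you read off Figure~\ref{fig:collision} are specific to the parameter values $(b_1,\ldots,b_5)=(0,-\tfrac12,0,1,1)$ used there. For general $b$ the four components are supported at $(b_4 t^2, 0, 0)$, $\bigl(\tfrac{b_4^3(b_3b_4-b_2b_5)}{\mathfrak B}t^2,0,0\bigr)$, $(0, b_4 t, 0)$, $(0, -b_4 t, 0)$, with lengths $7,2,2,1$, which one would not guess without actually carrying out the construction you have postponed.
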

	\begin{proof}
		Since being smoothable is a closed condition, it is enough to prove the statement for the general $I\subset R$ of the form \eqref{eq:generic-counterexample}.
		Without loss of generality, up to the $\Gl(3,\C)$-action, we can suppose $\ell=x$. Then, $I$ takes the following form 
		\[
		I=\big((x)+(y,z)^2\big)^2+(y^3 + b_1 y^2 z + b_2 y z^2 + b_3z^3 + b_4 xy + b_5 xz).
		\]
		In order to prove the statement, it is enough to exhibit a flat family $\mathcal Z\hookrightarrow \A^1\times \A^3$ of zero-dimensional subschemes of $\A^3$ such that the fiber over the origin $\mathcal Z_0$ is $I$ and such that the general fiber $\mathcal Z_t$ is supported at least two points and hence it is smoothable (see \cite{JOACHIM}).
		
		For every counterexample $I\subset R$ to the parity conjecture discussed in Theorem \ref{thmA}, the tangent vector
		\[
		\left(x^2,\ xy^2,\ xyz + \boldsymbol{\epsilon y^2z},\ xz^2,\ y^2z^2,\ yz^3,\ z^4,\  y^3 + b_1y^2z+b_2yz^2+b_3z^3+b_4xy+b_5xz \right)
		\]
		lifts to the deformation given by the following polynomials
		\begin{equation}\label{eq:smoothingDef}
			\begin{split}
				x^{2}&{} \frac{b_4^2(b_3 b_4 - b_2b_5)}{\mathfrak{B}}t^2\, y^2 - \frac{b_4 (b_3b_4-b_2b_5)(b_1b_4-b_5)}{\mathfrak B}t^2\, yz + \frac{b_3b_4b_5(b_1b_4-b_5)}{\mathfrak B}t^2\, z^2 \\
				&{}- \frac{b_4\big(2\mathfrak{B} - b_5^2(b_1b_4-b_5)\big)}{\mathfrak B}t^2\, x + \frac{b_4^4(b_3b_4-b_2b_5)}{\mathfrak B}t^4,\quad
				xy^2,\quad xyz+t\,y^{2}z-b_4t^{2}\, yz,\\ xz^{2}&{}-b_4t^{2}\,z^{2},\quad y^{2}z^{2},\quad  yz^{3},\quad z^{4},\quad y^{3}+b_1y^{2}z+b_2yz^{2}+b_3z^{3}+b_4xy+b_5xz-b_4^{2}t^{2}\ y-b_4b_5t^{2}\, z
			\end{split}
		\end{equation}
		whenever the polynomial $\mathfrak{B} 
		= b_{3}b_{4}^{3}-b_{2}b_{4}^{2}b_{5}+b_{1}b_{4}b_{5}^{2}-b_{5}^{3} = b_4^2(b_3b_4 - b_2b_5) + b_5^2(b_1b_4-b_5)$ is different from zero.
		Notice that away from the zero locus of the polynomial $\mathfrak B$ the ideal $I$ has tangent space of dimension 45 as proven in Theorem \ref{thmA}.
		If $b_4 \neq 0$, the generic fiber is the disjoint union of four schemes of length 7, 2, 2 and 1 (see Appendix \ref{subsec:smoothability} for the details).
	\end{proof}

\subsection{Comments} 
In this subsection we make some comments regarding our main result.

The first observation is that Theorem \ref{thmA} and Proposition \ref{prop:smoothability} admit more direct proofs that does not require the use of the computer. We decided to present them in this form because, as an outcome, we obtain a more accurate description of the locus parametrising the counterexamples.
Precisely, in Theorem \ref{thmA} we obtain the polynomial ${\mathfrak B}$ which defines the complement of this locus. In Proposition \ref{prop:smoothability} instead, we obtain an explicit deformation of our counterexample into a scheme with support consisting of four distinct points.
It is worth mentioning that we found the first counterexamples via \textit{Macaulay2} computations using marked basis as main tool. Also for this reason we have presented the proofs in this form.

The Hilbert scheme of 12 points on a smooth threefold is interesting for several reasons. For instance 12 is the minimal number of points for which the irreducibility of the Hilbert scheme is not known.
It is then natural to ask whether the failure of parity conjecture is peculiar of $\Hilb^{d}\A^3$ for $d\geqslant 12 $ or if it fails already for $\Hilb^{11}\A^3$. Our method does not produce any counterexample of length smaller than 12. Nevertheless, we tested  a large number of ideals of colength 11 without finding new counterexamples.
Precisely, starting from the description of the so called Hilbert--Samuel stratification of the punctual Hilbert scheme of 11 points described in \cite{JOACHIM} and using the marked basis technology, we have generated some examples of non-homogeneous ideals in each of the Hilbert--Samuel strata so certifying the veridicity of the conjecture at each of these points.
A complete proof of the parity conjecture for the  Hilbert scheme of 11 points is outside the scope of this article. Hence, we leave this question open.

We conclude this section with a remark regarding the characteristic of the base field. A direct check shows that our counterexamples are valid in any characteristic other than 2 (see Remark \ref{rem:dim char}). We have tested in characteristic 2 all the counterexamples we found (see Remark \ref{rem:other-cexamples}) and none of them disproves the conjecture. Therefore, the parity conjecture remains open in characteristic 2.

	\section{Explicit description of the locus of counterexamples}\label{sec:description}
	
	We describe now the locus in $\Hilb^{12} \A^3$ of the counterexamples to the parity conjecture given in Theorem \ref{thmA}. It is worth pointing out that we do not know if the locus described comprise all the counterexamples of length less or equal than 12 to Conjecture \ref{conjecture}.
	
	We consider the projective space $\Pj(R_1)$ parametrising 1-dimensional linear subspaces of $R_1$ and the tautological exact sequence on it
	\[
	\begin{tikzcd}
		0\arrow{r}&
		\Cals\arrow{r}&
		\Calo_{\Pj(R_1)}\otimes R_1\arrow{r}&
		\Calq\arrow{r}&0,
	\end{tikzcd}
	\]
	where $\Cals$ is the universal subbundle, and $\Calq$ is the universal quotient bundle. We set $E$ to be the vector bundle
	\[ 
	E=\Sym^3 (\Calq)\oplus (\Cals\underset{\Calo_{\Pj(R_1)}}{\otimes}\Calq)
	\]
	and we consider the associated projective bundle $\Pj(E)$ over $\Pj(R_1)$. Its fiber $\Pj(E)_\ell$ over the point $[\ell]\in\Pj(R_1)$ is the space $\Delta_\ell$ introduced in Remark \ref{rem:LAMBDAell}.
	The morphism defined by the association 
	\[
	\begin{tikzcd}[row sep=tiny]
		\Pj(E) \arrow{r}& \Hilb^{12} \mathbb A^3\\
		({[}\ell{]},{[}v{]})\arrow[mapsto]{r}& \gamma_\ell\left({[}v{]}\right), 
	\end{tikzcd}
	\]
	where $\gamma_\ell$ is the bijection defined in Remark \ref{rem:LAMBDAell}, induces a family $\mathcal Z_E\subset \Pj(E)\times \A^3$ of zero-dimensional schemes of length 12.	
	
	Consider the coordinate atlas on $\Pj(E)$ induced by the basis $\Set{x,y,z}$ of $R_1$, then the restrictions of the family to the coordinate charts agree with the families $\mathcal Z_{\mathcal{I}_x},\mathcal Z_{\mathcal{I}_y},\mathcal Z_{\mathcal{I}_z} $ in Corollary \ref{cor:afffamily}.
	
	Finally, we enlarge our family by acting via translations and we get a new family $\mathcal Z\subset \Pj(E)\times \A^3\times \A^3$ of fat points non-necessarily supported at the origin $0\in \A^3$.
	
	In what follows we shall denote by $V$ the product $\Pj(E)\times \A^3$ and by $\theta: V\to \Hilb^{12}\A^3$ the classifying morphism.
	
	\begin{remark}
		Notice that set-theoretically the image $\theta(V)$ consists of the ideals described in
		\[
		\theta(V)=\Set{ [I]\in \Hilb^{12} \A^3 | I \mbox{ is of the form \eqref{eq:generic-counterexample} } }.
		\]
	\end{remark}
	
	\begin{theorem}
		The morphism $\theta\colon V\to\Hilb^{12}\A^3$ described above is a closed immersion of the smooth and irreducible 10-dimensional variety $V$.
	\end{theorem}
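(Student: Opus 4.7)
The plan is to establish the geometric properties of $V$ directly and then to reduce the closed-immersion statement, via the $\A^3$-translation action, to a proper morphism so that the ``proper monomorphism is a closed immersion'' criterion applies. The base $\Pj(R_1)\cong\Pj^2$ is smooth and irreducible of dimension $2$, $E$ is a vector bundle of rank $6$, and $\A^3$ is smooth of dimension $3$, so the product $V=\Pj(E)\times\A^3$ is automatically smooth, irreducible, and of dimension $10$.

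For the closed-immersion statement, denote by $\Hilb^{12}_0\A^3$ the punctual Hilbert scheme of length-$12$ subschemes supported at the origin, which is projective since any such subscheme contains $\mathfrak{m}^{12}$ and so $\Hilb^{12}_0\A^3$ embeds as a closed subscheme of the Quot scheme of $R/\mathfrak{m}^{12}$. The translation morphism
\[
\tau\colon \Hilb^{12}_0\A^3\times\A^3\longrightarrow \Hilb^{12}\A^3,\qquad (Z,p)\longmapsto Z+p,
\]
identifies its source with the preimage of the small diagonal of $\Sym^{12}\A^3$ under Hilbert--Chow, hence is a closed immersion. Since $\theta$ factors as
\[
V=\Pj(E)\times\A^3\xrightarrow{\theta_0\times\mathrm{id}}\Hilb^{12}_0\A^3\times\A^3\xrightarrow{\tau}\Hilb^{12}\A^3,
\]
it will be enough to prove that $\theta_0\colon \Pj(E)\to \Hilb^{12}_0\A^3$, $(\ell,v)\mapsto ((\ell)+\mathfrak{m}^2)^2+(v)$, is a closed immersion; both schemes being projective, $\theta_0$ is automatically proper, so the task reduces to injectivity on geometric points and unramifiedness.

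Injectivity on points will follow from recovering $(\ell,v)$ from $I$. A direct computation in $R/\mathfrak{m}^3$, using that the image of $I$ there is generated in degree $2$ by $\ell^2$ and an element of $\ell\cdot L$, shows that a linear form $\ell'=a\ell+m$ with $m\in L$ satisfies $(\ell')^2\in I$ only when $m=0$, because the $\Sym^2 L$-component $m^2$ cannot be cancelled. Hence $[\ell]\in\Pj(R_1)$ is the unique class whose square lies in $I$, and once $[\ell]$ is fixed the class $[v]\in \Delta_\ell$ is recovered as the one-dimensional quotient $I/((\ell)+\mathfrak{m}^2)^2$, using the colength jump from $13$ to $12$ established in the proof of Theorem~\ref{thmA}.

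The main obstacle will be the unramifiedness of $\theta_0$. The tangent space $\mathsf{T}_{(\ell,v)}\Pj(E)=\mathsf{T}_{[\ell]}\Pj(R_1)\oplus \mathsf{T}_{[v]}\Delta_\ell$ has dimension $7$ and must inject into $\mathsf{T}_{[I]}\Hilb^{12}_0\A^3$ at every closed point, including the locus $\mathfrak{B}=0$ where the ambient tangent space may jump and the explicit description of Theorem~\ref{thmA} no longer applies directly. I would use $\Gl(3,\C)$-equivariance to reduce to $\ell=x$, write down seven explicit first-order deformations of $I$, namely two perturbing $\ell$ into the complement $\langle y,z\rangle$ and five varying the coefficients $[b_0:\cdots:b_5]$ of $v$, and then verify their linear independence inside the description of $\mathsf{T}_{[I]}\Hilb^{12}\A^3$ provided by the marked family technology. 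Carrying out this rank-$7$ check uniformly over all of $\Pj(E)$ is the step where I would invoke a Macaulay2 computation in the spirit of Appendix~\ref{APPENDICE}.
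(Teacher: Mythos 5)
Your proposal is correct and follows essentially the same route as the paper. Both arguments factor $\theta$ through the punctual Hilbert scheme via $\Hilb_0^{12}\A^3\times\A^3\hookrightarrow\Hilb^{12}\A^3$, deduce properness from projectivity of $\Pj(E)$, recover $[\ell]$ from $I$ as the unique linear form whose square lies in $I$ (and then $[v]$ from the one-dimensional quotient $I/((\ell)+\mathfrak m^2)^2$), and verify injectivity of the differential by an explicit Macaulay2 computation of the seven first-order deformations; the paper phrases the conclusion as ``universally closed + injective + $d\theta$ full rank'' while you invoke the equivalent ``proper monomorphism is a closed immersion'' criterion.
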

	
	\begin{proof}
		We start proving that the map $\theta$ is universally closed.
		Let $\Hilb_0^{12}\A^3$ be the projective subscheme of $\Hilb^{12}\A^3$ parametrising fat points $Z\subset \A^3$ of length 12 supported at the origin $0\in\A^3$. Let also $ \A^3\times \Hilb_0^{12}\A^3\subset \Hilb^{12}\A^3$ be the closed subscheme of $\Hilb^{12}\A^3$ parametrising all the fat points $Z\subset \A^3$ of length 12. The map $\theta|_{\Pj(E)\times \Set{0}}$ takes values in $\Hilb_0^{12}\A^3$ and it is proper because $\Pj(E)$ is projective. Finally, the map $\theta$ is universally closed because the square in the following diagram 
		\[
		\begin{tikzcd}
			V \arrow{r}\arrow{d}\arrow[bend left=20]{rr}{\theta} &\A^3\times \Hilb_0^{12}\A^3 \arrow{r}\arrow{d} &  \Hilb^{12}\A^3\\
			\Pj(E) \arrow{r} & \Hilb_0^{12}\A^3
		\end{tikzcd}
		\]
		is cartesian.
		
		The map $\theta$ is injective because different points in $V$ lead to different ideals in $ \Hilb^{12} \A^3$ as it can be seen as follows. It is enough to treat fat points supported at the origin, i.e. points of $\Pj(E)\times \Set{0}$ which we identify with $\Pj(E)$. Now to conclude, the bijection $\gamma_\ell$ maps different points of $\Delta_\ell$ to different ideals, and any ideal $\gamma_\ell(\Delta_\ell)$ contains up to scalar the square of a unique linear form, namely $\ell$.
		
		In order to prove the statement, we show that the differential $d\theta$ is full rank at every point.
		It is enough to prove the claim for ideals belonging to the open $U_x\subset \Pj(E)\times \{0\}\subset V$ consisting of ideals of the form
		\[
		\big((x + a_1 y + a_2 z\big) + \big(y,z)^2 \big)^2 + \big(b_0 y^3 + b_1 y^2z + b_2 yz^2 + b_3 z^3 + b_4 xy + b_5 xz \big).
		\]
		
		The proof now consists of a direct and standard computation, but quite lengthy to be reported on paper. Hence, we exhibit the result only for the ideals lying in the open subset defined by the condition $b_0\not=0$. 
		We refer to the \emph{M2} ancillary file \href{\linkProofClosedImmersion}{\tt proof\_closed-immersion.M2} for the complete discussion.
		
		Concretely, given $\ell  = x + a_1 y + a_2 z$, and  $v = y^3 + b_1y^2z + b_2 yz^2 + b_3 z^3 + b_4 xy + b_5 xz,$  we show that the differential
		\[
		d\theta_{\left([\ell],[v],0\right)}: \mathsf{T}_{[\ell]} \Pj(R_1) \times \mathsf{T}_{[v]} \Delta_\ell \times \mathsf{T}_0 \mathbb{A}^3 \to \mathsf{T}_{\gamma_\ell ([v])} \Hilb^{12} \mathbb{A}^3
		\]
		is injective.
		
		\smallskip
		
		Let us denote by $\{\partial_{\tau_x},\partial_{\tau_y},\partial_{\tau_z}\}\subset \mathsf{T}_0 \mathbb{A}^3 $ be the basis corresponding to the variables $x,y,z$. The image of $d\theta$  at the point $\left([\ell],[v],0\right)$  is generated by the first order deformations
	
		\[
		\begin{split}
			\mathsf{T}_{[\ell]} \Pj(R_1):\qquad&  d\theta(\partial_{\alpha_1}) = (x^2 + 2\epsilon\, xy, xy^2 + \epsilon\, y^3, xyz + \epsilon\, y^2 z, xz^2 + \epsilon\, yz^2,y^2z^2,yz^3,z^4, \\[-4pt]
			 &\hspace{1.9cm} v + b_4\epsilon\, y^2 + b_5 \epsilon\, yz) \\ 
			&d\theta(\partial_{\alpha_2}) = (x^2 + 2\epsilon\, xz, xy^2 + \epsilon\, y^2 z, xyz + \epsilon\, y z^2, xz^2 + \epsilon\, z^3,y^2z^2,yz^3,z^4,\\[-4pt]
			 &\hspace{1.9cm}v+ b_4 \epsilon\, yz + b_5 \epsilon\, z^2)\\ 
			\mathsf{T}_{[v]} \Delta_\ell:\qquad& d\theta(\partial_{\beta_1}) = (x^2, xy^2, xyz, xz^2, y^2z^2, yz^3, z^4, v + \epsilon\, y^2z) \\ 
			&d\theta(\partial_{\beta_2})= (x^2, xy^2, xyz, xz^2, y^2z^2, yz^3, z^4, v + \epsilon\, yz^2) \\
			&d\theta(\partial_{\beta_3})= (x^2, xy^2, xyz, xz^2, y^2z^2, yz^3, z^4, v + \epsilon\, z^3) \\
			&d\theta(\partial_{\beta_4}) =(x^2, xy^2, xyz, xz^2, y^2z^2, yz^3, z^4, v + \epsilon\, xy) \\
			&d\theta(\partial_{\beta_5})= (x^2, xy^2, xyz, xz^2, y^2z^2, yz^3, z^4, v + \epsilon\, xz) \\
			\mathsf{T}_0 \mathbb A^3:\qquad&d\theta(\partial_{\tau_x})= (x^2 + 2\epsilon\, x, xy^2 + \epsilon\, y^2, xyz + \epsilon\, yz, xz^2 + \epsilon\, z^2, y^2z^2, yz^3, z^4,\\[-4pt]
			 &\hspace{1.9cm} v + b_4 \epsilon\, y + b_5 \epsilon\, z) \\ 
			&d\theta(\partial_{\tau_y})= (x^2, xy^2 + 2\epsilon\, xy, xyz + \epsilon\, xz, xz^2, y^2z^2 + 2\epsilon\, yz^2, yz^3 + \epsilon\, z^3, z^4,\\[-4pt]
			&\hspace{1.9cm} v + 3\epsilon\, y^2 + 2b_1\epsilon\, yz + b_2 \epsilon\, z^2 + b_4 \epsilon\, x) \\
			&d\theta(\partial_{\tau_z})= (x^2, xy^2, xyz +\epsilon\, xy, xz^2 + 2\epsilon\, xz, y^2z^2 + 2\epsilon\, y^2z, yz^3 + 3\epsilon\, yz^2,\\[-4pt]
			&\hspace{1.9cm}  z^4 + 4\epsilon\, z^3,v + b_1\epsilon\, y^2 + 2b_2 \epsilon\, yz + 3b_3\epsilon\, z^2 + b_5 \epsilon\, x),
		\end{split}
		\]
		thus $\theta$ is a closed immersion of the irreducible and smooth variety $V$ of dimension 10.
	\end{proof}

	\appendix
	\section{Computational methods: marked bases}\label{APPENDICE}
	In this section we give further details about the computational content of the paper. We also provide ancillary files based on the {\em M2} Package \href{www.paololella.it/parity-conjecture/CounterexampleParityConjectureAuxiliaryTools.m2}{\small\tt CounterexampleParityConjectureAuxiliaryTools.m2}.
	
	In the polynomial ring $\C[x_1,\ldots,x_n]$, we assume that the variables are ordered as $x_1 < x_2 < \ldots < x_n$. Given a monomial $x^{\underline{\alpha}} \neq 1 \in \C[x_1,\ldots,x_n]$, we define the \textit{minimum} and the \textit{maximum} of a $x^{\underline{\alpha}} \neq 1$ as the smallest and largest variable dividing $x^{\underline{\alpha}}$ and we define its \textit{Pommaret cone} as the set of monomials
	\[
	\mathcal{C}(x^{\underline{\alpha}}) = \left\{x^{\underline{\alpha}}\right\} \cup \left\{x^{\underline{\alpha}}\ x^{\underline{\beta}}\ \middle\vert\ \max x^{\underline{\beta}} \leqslant \min x^{\underline{\alpha}} \right\}.
	\]

	The monomials in the complementary set of an order ideal $\mathcal{N}$ generate a monomial ideal in $\C[x_1,\ldots,x_n]$ that we denote by $J_{\mathcal{N}}$. If $\mathcal{N}$ is finite, the ideal $J_{\mathcal{N}}$ is always \emph{quasi-stable} \cite[Corollary 2.3]{BertoneCioffiRoggero}, i.e.~$J_{\mathcal{N}}$ has a finite set of generators $\mathcal{P}_{J_{\mathcal{N}}} = \{x^{\underline{\alpha}_1},\ldots,x^{\underline{\alpha}_k}\}$ called \textit{Pommaret basis} of $J_{\mathcal{N}}$ such that the set of monomials in $J_{\mathcal{N}}$ decomposes as the union of the Pommaret cones
	\[
	\mathcal{C}(x^{\underline{\alpha}_1}) \sqcup\ \cdots\ \sqcup \mathcal{C}(x^{\underline{\alpha}_k}).
	\]
	of the elements of the Pommaret basis.
	\begin{definition}
		Let $J \in \C[x_1,\ldots,x_n]$ be a monomial ideal of finite colength, $\mathcal{P}_J$ be its  Pommaret basis and $\mathcal{N}$ be the finite order ideal whose image in the quotient is a monomial basis. A (monic) \emph{$J$-marked set} is a set of polynomials
		\[
		F := \left\{ f_{\underline{\alpha}} := x^{\underline{\alpha}} + \sum_{x^{\underline{\gamma}}\in\mathcal{N}} c_{\underline{\alpha},\underline{\gamma}} x^{\underline{\gamma}} \ \middle\vert\ x^{\underline{\alpha}} \in \mathcal{P}_J,\   c_{\underline{\alpha},\underline{\gamma}} \in \C\right\}.
		\]
		
		A $J$-marked set $F$ is said \emph{$J$-marked basis} if 
		\[
		\C[x_1,\ldots,x_n] = (F) \oplus \langle \mathcal{N}\rangle
		\]
		as vector spaces.
	\end{definition}
	
	The notion of marked basis generalises the notion of reduced Gr\"obner basis. In particular, from the definition it follows that $\mathcal{N}$ is a basis of the quotient algebra $\C[x_1,\ldots,x_n]/(F)$. Hence, every ideal $I \subset \C[x_1,\ldots,x_n]$ generated by a $J$-marked set defines a zero-dimensional scheme of length $\vert \mathcal{N}\vert$.
	
	With the notion of marked bases, we can rephrase the definition of marked family functor as follows
	\[
	\underline{\Mf}_{\mathcal{N}}(A) = \left\{   (F) \subset A[x_1,\ldots,x_n]\ \middle\vert\ F \text{~is a $J_{\mathcal{N}}$-marked basis}\right\}.
	\]
	This formulation has a crucial role because
	there is an effective (and algorithmic) criterion to determine whether a $J$-marked set is a $J$-marked basis. Given a $J$-marked set $F = \{f_{\underline{\alpha}}\ \vert\ x^{\underline{\alpha}} \in \mathcal{P}_J\}$, we denote by $\xrightarrow{F}$ the transitive closure of the relation $g \xrightarrow{} g - (c x^{\underline{\beta}}) f_{\underline{\alpha}}$, where $x^{\underline{\beta}} x^{\underline{\alpha}}$ is a monomial appearing in $g$ with coefficient $c\neq 0$ and $x^{\underline{\beta}} \in \mathcal{C}(x^{\underline{\alpha}})$. This reduction procedure is noetherian \cite[Theorem 5.9]{CeriaMoraRoggero} \cite[Proposition 4.3]{BertoneCioffiRoggero}, that is starting from any polynomial $g$ we obtain with a finite number of steps a polynomial $h \in \langle \mathcal{N}\rangle$. We write $g \xrightarrow{F}_{\ast} h$ to denote the beginning and the end of the reduction process and we call $h  \in \langle \mathcal{N}\rangle$ the $J$-normal form of $g$ with respect to $F$.
	
	\begin{theorem}[{\cite[Proposition 5.6]{BertoneCioffiRoggero}}]\label{thm:effective} 
		Given\hfill a\hfill zero-dimensional\hfill quasi-stable\hfill ideal\hfill $J$\hfill in\\ $\C[x_1,\ldots,x_n]$, a $J$-marked set $F= \{f_{\underline{\alpha}}\ \vert\ x^{\underline{\alpha}} \in \mathcal{P}_J\}$ is a $J$-marked basis if and only if,
		\[
		x_i f_{\underline{\alpha}} \xrightarrow{F}_\ast 0,\qquad\forall\ f_{\underline{\alpha}} \in F,\quad \forall\ x_i > \min x^{\underline{\alpha}}.
		\]
	\end{theorem}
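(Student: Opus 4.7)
The plan is to prove the two directions separately, using the noetherianity of $\xrightarrow{F}_\ast$ as the main external input. Necessity is short: fix $f_{\underline{\alpha}} \in F$ and $x_i > \min x^{\underline{\alpha}}$, and let $h \in \langle \mathcal{N}\rangle$ be the endpoint of a reduction $x_i f_{\underline{\alpha}} \xrightarrow{F}_\ast h$ guaranteed by noetherianity. Each reduction step subtracts an element of $(F)$, so $x_i f_{\underline{\alpha}} - h \in (F)$, and combined with $x_i f_{\underline{\alpha}} \in (F)$ this forces $h \in (F) \cap \langle \mathcal{N}\rangle$, which is zero by the very definition of $J$-marked basis. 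Hence $h = 0$.

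For sufficiency, I assume the reduction criterion. Noetherianity immediately yields $\C[x_1,\ldots,x_n] = (F) + \langle \mathcal{N}\rangle$: every $g$ reduces to some $h \in \langle \mathcal{N}\rangle$ with $g - h \in (F)$. The real content is to upgrade this to a direct sum, and I plan to do so by establishing the stronger claim
\[
(\star) \qquad x^{\underline{\beta}} f_{\underline{\alpha}} \xrightarrow{F}_\ast 0 \qquad \text{for every monomial } x^{\underline{\beta}} \text{ and every } f_{\underline{\alpha}} \in F.
\]
Once $(\star)$ is known, any $p = \sum_k c_k x^{\underline{\beta}_k} f_{\underline{\alpha}_k} \in (F)$ admits a reduction to $0$ obtained by concatenating the individual reductions $x^{\underline{\beta}_k} f_{\underline{\alpha}_k} \xrightarrow{F}_\ast 0$; if additionally $p \in \langle \mathcal{N}\rangle$, then $p$ contains no reducible monomial and any reduction sequence starting from $p$ is necessarily stationary at $p$, forcing $p = 0$ and hence $(F) \cap \langle \mathcal{N}\rangle = 0$.

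I would prove $(\star)$ by well-founded induction on $x^{\underline{\beta}}$ with respect to the ordering underlying the noetherianity of $\xrightarrow{F}_\ast$. The base case $\max x^{\underline{\beta}} \le \min x^{\underline{\alpha}}$ is immediate: then $x^{\underline{\beta}} x^{\underline{\alpha}} \in \mathcal{C}(x^{\underline{\alpha}})$ and one reduction step using $f_{\underline{\alpha}}$ itself kills $x^{\underline{\beta}} f_{\underline{\alpha}}$. Otherwise write $x^{\underline{\beta}} = x_i x^{\underline{\beta}'}$ with $x_i > \min x^{\underline{\alpha}}$. The hypothesis provides a reduction $x_i f_{\underline{\alpha}} = \sum_j c_j x^{\underline{\gamma}_j} f_{\underline{\alpha}_j}$ in which each monomial $x^{\underline{\gamma}_j} x^{\underline{\alpha}_j}$ is strictly smaller than $x_i x^{\underline{\alpha}}$; multiplying by $x^{\underline{\beta}'}$ expresses $x^{\underline{\beta}} f_{\underline{\alpha}}$ as $\sum_j c_j (x^{\underline{\beta}'} x^{\underline{\gamma}_j}) f_{\underline{\alpha}_j}$, to which the inductive hypothesis applies term by term.

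The main obstacle is to make this induction watertight: one must verify that the ordering used for noetherianity is compatible with multiplication (so that multiplying by $x^{\underline{\beta}'}$ preserves the strict decrease of the relevant leading monomials) and then patch the individual reductions of the summands into a single reduction path of $x^{\underline{\beta}} f_{\underline{\alpha}}$ itself. This is the Pommaret-cone analogue of Buchberger's $S$-polynomial criterion: the hypothesis of the theorem is precisely what resolves the critical pairs of the reduction system, namely the products of a marked polynomial $f_{\underline{\alpha}}$ with a variable $x_i$ \emph{outside} its Pommaret cone direction, whereas multiplication by an $x_i \le \min x^{\underline{\alpha}}$ keeps the product inside the cone $\mathcal{C}(x^{\underline{\alpha}})$ and is handled trivially by one deterministic reduction step.
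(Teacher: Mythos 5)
The statement is cited by the paper from \cite[Proposition 5.6]{BertoneCioffiRoggero} without proof, so there is no in-paper argument to compare against; I can only evaluate your proposal on its own merits.

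The necessity direction is correct: noetherianity gives some $h\in\langle\mathcal N\rangle$ with $x_i f_{\underline\alpha}\xrightarrow{F}_\ast h$, each step subtracts a multiple of some $f_{\underline\alpha'}$, so $x_i f_{\underline\alpha}-h\in(F)$, hence $h\in(F)\cap\langle\mathcal N\rangle=0$ once the direct-sum decomposition is assumed.

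The sufficiency direction, however, has a genuine gap, and it occurs at exactly the place you flag but then brush past. You claim that $(\star)$ lets you reduce an arbitrary $p=\sum_k c_k x^{\underline\beta_k}f_{\underline\alpha_k}\in(F)$ to $0$ ``by concatenating the individual reductions $x^{\underline\beta_k}f_{\underline\alpha_k}\xrightarrow{F}_\ast 0$.'' This does not work: a single reduction step $g\to g-cx^{\underline\gamma}f_{\underline\alpha}$ is only legal when the monomial $x^{\underline\gamma}x^{\underline\alpha}$ actually occurs in $g$ with coefficient $c\neq 0$. When you add the summands, monomials that were present in each $x^{\underline\beta_k}f_{\underline\alpha_k}$ may cancel in $p$, so a step that was legal on a summand need not be legal on the sum. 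The relation $\xrightarrow{F}$ is therefore \emph{not} additive, and the reductions of the summands cannot be spliced into a reduction of $p$. The same defect undermines your inductive step for $(\star)$: after expanding $x^{\underline\beta}f_{\underline\alpha}=\sum_j c_j(x^{\underline\beta'}x^{\underline\gamma_j})f_{\underline\alpha_j}$ you again need to turn term-by-term reductions into a reduction of the sum, and nothing in your proposal explains how. There is also a second issue in the induction that you do not address: multiplying a valid reduction step $g\to g-cx^{\underline\gamma}f_{\underline\alpha'}$ (so $\max x^{\underline\gamma}\le\min x^{\underline\alpha'}$) by $x^{\underline\beta'}$ generally destroys the Pommaret condition, since $\max(x^{\underline\beta'}x^{\underline\gamma})$ can exceed $\min x^{\underline\alpha'}$; so the naive ``multiply the reduction path by $x^{\underline\beta'}$'' also fails.

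What $(\star)$ legitimately gives you, by unwinding the reduction chain, is that every $q\in(F)$ admits a \emph{Pommaret representation} $q=\sum_k e_k x^{\underline\gamma_k}f_{\underline\alpha_k}$ with $\max x^{\underline\gamma_k}\le\min x^{\underline\alpha_k}$ and distinct pairs. The missing content of the proof is then to show that a nonzero polynomial supported on $\mathcal N$ cannot admit such a representation, which requires an auxiliary well-ordering compatible with the tails of the $f_{\underline\alpha}$ (or, equivalently, establishing local confluence of $\xrightarrow{F}$ and invoking Newman's lemma so that the normal form becomes a well-defined linear projection onto $\langle\mathcal N\rangle$ with kernel $(F)$). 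As written, your argument replaces this nontrivial step with an invalid concatenation, so the sufficiency direction is not established.
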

	
	\begin{example} 
		In the polynomial ring $\C[x,y]$ (with $x>y$), consider the monomial ideal $J = (x^2,y)$. The monomial basis $\mathcal{N}$ of $\C[x,y]/J$ is the pair $\{x,1\}$ and the Pommaret basis of $J$ is $\mathcal{P}_J = \{x^2,xy,y\}$. A $J$-marked set $F$ is a triple of polynomials 
		\[
		f_{x^2} = x^2 + c_1 x + c_2,\qquad f_{xy} = xy + c_3 x + c_4,\qquad f_{y} = y + c_5 x + c_6.
		\]
		In order to have $J$-marked basis, the $J$-normal forms of $xf_{xy}$ and $xf_y$ have to be zero. One has
		\[
		\begin{split}  
			xf_{xy} = x^2y + c_3x^2 + c_4x &\quad\xrightarrow{-yf_{x^2}}\quad c_3x^2 - c_1xy + c_4x - c_2 y \\[-6pt]
    &\quad\xrightarrow{-c_3 f_{x^2}}\quad -c_1 xy + (c_4-c_1c_3)x - c_2 y - c_2c_3 \\[-6pt]
			&\quad\xrightarrow{+c_1 f_{xy}}\quad  c_4 x - c_2 y + c_1c_4 - c_2c_3 \\[-6pt]
   &\quad\xrightarrow{+c_2 f_{y}}\quad (c_4+c_2c_5)x  + (c_1c_4 - c_2c_3 + c_2c_6), \\
			xf_{y} = xy + c_5x^2 + c_6x &\quad\xrightarrow{-f_{xy}}\quad c_5x^2 - (c_3-c_6)x - c_4 \\[-6pt]
   &\quad\xrightarrow{-c_5f_{x^2}}\quad -(c_3-c_6+c_1c_5)x - (c_4 + c_2c_5).
		\end{split}
		\]
		Hence, $F$ is a marked basis if and only if
		\[
		c_3-c_6+c_1c_5 = 0\quad\text{and}\quad c_4 + c_2c_5 = 0
		\]
		(the third equation is redundant). These equations define the open subset of $\Mf_{\mathcal{N}}\subset \Hilb^2 \A^2$ and guarantee that the generator $f_{xy}$ is redundant as expected. In fact, $f_{xy} = xy + (c_6-c_1c_5)x - c_2c_5 = xf_y - c_5f_{x^2}$.
		
		If we want to describe the tangent space $\mathsf{T}_{\left[(F)\right]} \Hilb^2 \A^2 \simeq \mathsf T_{[(F)]}\Mf_{\mathcal{N}} \simeq \Hom_{(F)}(D, \Mf_{\mathcal{N}})$, we can compute the set of flat families $\underline{\Mf}_{\mathcal{N}}(D)$ with the unique closed point corresponding to $(F)$. We start with the marked set $\tilde{F}$ consisting of the following polynomials in $\C[\epsilon][x,y]$
		\[
		\tilde{f}_{x^2} = x^2 + (c_1+T_1\epsilon) x + (c_2+T_2\epsilon),\ \tilde{f}_{xy} = xy + (c_3 +T_3\epsilon) x + (c_4+T_4\epsilon),\ \tilde{f}_{y} = y + (c_5+T_5\epsilon) x + (c_6+T_6\epsilon),
		\]
		where $T_1,\ldots,T_6$ are complex parameters, and we impose the flatness via Theorem \ref{thm:effective}. Assuming that $F$ is a $J$-marked basis, the same holds for $\tilde{F}$ if
		\[
		\begin{split}
			x\tilde{f}_{xy}\quad &\xrightarrow{\tilde F}_\ast\quad \big(c_5T_2 + T_4 + c_2T_5\big)\epsilon\, x + \big(c_4T_1 - (c_3-c_6)T_2 - c_2T_3 + c_1 T_4 +c_2T_6\big)\epsilon= 0,\\
			x\tilde{f}_{y}\quad &\xrightarrow{\tilde F}_\ast\quad \big(c_5T_1 + T_3 + c_1T_5 - T_6\big)\epsilon\, x + \big(c_5T_2  + T_4 + c_2 T_5\big)\epsilon= 0.
		\end{split}
		\]
		By solving the linear system
		\[
		\left[
		\begin{array}{cccccc}
			0 & c_5 & 0 & 1 & c_2 & 0 \\ c_5 & 0 & 1 & 0 & c_1 & -1 
		\end{array}
		\right]
		\begin{bmatrix}
			T_1 \\ T_2 \\ T_3 \\ T_4 \\ T_5 \\ T_6
		\end{bmatrix}
		=
		\left[
		\begin{array}{c}
			0 \\ 0
		\end{array}
		\right],
		\]
		(the third equation is again redundant) one obtains a complete description of first order deformations of the ideal generated by $F$.
	\end{example}

	\subsection{Computational details of the proof of Theorem \ref{thmA}}\label{subsec:counterexample}

	The detailed computation of the tangent space of ideals of type \eqref{eq:generic-counterexample} is available in the \emph{Macaulay2} ancillary file \href{\linkProofCounterExample}{\tt\small proof\_counterexamples.m2}. Here, we report a summary of the procedure and the results.
	
	Assume that in $R=\C[x,y,z]$, $x>y>z$ and consider the quasi-stable ideal 
	\[
	J = \big(x + (y,z)^2\big)^2 + \big(y^3) = \left(x^2,xy^2,y^3,xyz,xz^2,y^2z^2,yz^3,z^4\right).
	\]
	The minimal set of generators of $J$ is its Pommaret basis $\mathcal{P}_J$. An ideal $I$ of the form  \eqref{eq:generic-counterexample} is generated by the $J$-marked basis\footnote{Notice that the polynomial $f_{y^3}$ is called $v$ in Theorem \ref{thmA}.}
	\[
	\begin{split}
		&f_{x^2} = x^2,\quad f_{xy^2} = xy^2,\quad f_{y^3} = y^3 + b_1y^2z+b_2yz^2+b_3z^3+b_4xy+b_5xz,\\
		&f_{xyz} = xyz,\quad f_{xz^2} = xz^2,\quad f_{y^2z^2}=y^2z^2,\quad f_{yz^3}=yz^3,\quad f_{z^4}=z^4.
	\end{split}
	\]
	Thus, $[I] \in \Mf_{\mathcal{N}}$ with $\mathcal{N}$ the order ideal
	\[
	\mathcal{N} = \left\{ 1, x, y, z, xy, y^2, xz, yz, z^2, y^2z,yz^2,z^3\right\}.
	\]
	We compute $\mathsf T_{[I]}\Hilb^{12}\A^3 = \mathsf T_{[I]}\Mf_{\mathcal{N}_J}$ as the set of morphisms $\text{Hom}_I(D,\Mf_{\mathcal{N}})$ mapping the unique closed point of $D$ to $[I]$. This is equivalent to classify all $J$-marked sets $\widetilde{F}$ consisting in polynomials in $\C[\epsilon][x,y,z]$ of the form
	\[
	\begin{split}
		\tilde{f}_{\bullet} = f_{\bullet} &{}+ T_{\bullet,1}\epsilon\, y^2z +  T_{\bullet,2}\epsilon\, yz^2 + T_{\bullet,3}\epsilon\, z^3 + T_{\bullet,4}\epsilon\, xy + T_{\bullet,5}\epsilon\, y^2 + T_{\bullet,6}\epsilon\, xz + {} \\ &{}+ T_{\bullet,7}\epsilon\, yz + T_{\bullet,8}\epsilon\, z^2 + T_{\bullet,9}\epsilon\, x + T_{\bullet,10}\epsilon\, y + T_{\bullet,11}\epsilon\, z + T_{\bullet,12}\epsilon,\qquad \forall\ f_{\bullet} \text{~generator of~}I
	\end{split}
	\]
	that are $J$-marked bases. 
	
	Imposing that
	\[
	\begin{split}
		& x\tilde{f}_{xy^2} \xrightarrow{F}_\ast 0,\quad  x\tilde{f}_{y^3} \xrightarrow{F}_\ast 0,\quad  x\tilde{f}_{xyz} \xrightarrow{F}_\ast 0,\quad  y\tilde{f}_{xyz} \xrightarrow{F}_\ast 0,\quad
		x\tilde{f}_{xz^2} \xrightarrow{F}_\ast 0,\quad  y\tilde{f}_{xz^2} \xrightarrow{F}_\ast 0,\\  
		&x\tilde{f}_{y^2z^2} \xrightarrow{F}_\ast 0,\quad y\tilde{f}_{y^2z^2} \xrightarrow{F}_\ast 0,\quad  x\tilde{f}_{yz^3} \xrightarrow{F}_\ast 0,\quad  y\tilde{f}_{yz^3} \xrightarrow{F}_\ast 0,\quad x\tilde{f}_{z^4} \xrightarrow{F}_\ast 0,\quad  y\tilde{f}_{z^4} \xrightarrow{F}_\ast 0
	\end{split}
	\]
	gives rise to a linear system of 98 equations in $96$ variables. Looking at the minors of the system matrix $S$, we get that $42 \leqslant \text{rk}(S) \leqslant 51$ and
	\[
	\text{rk}(S) < 43,\ \text{if~}b_4=b_5 = 0,\qquad \text{rk}(S) < 51,\ \text{if~}\mathfrak B=0,
	\]
	where $\mathfrak B = b_3b_4^3 - b_2b_4^2b_5 + b_1b_4b_5^2 - b_5^3$ is the same polynomial which appears in the proof of Theorem \ref{thmA}.
	Hence,
	\begin{equation}\label{eq:tableTangentSpaceDim}
	\dim_{\C} \mathsf{T}_{[I]} \Hilb^{12} \A^3 = \begin{cases}
		54,&\text{if~}b_4=b_5 = 0\\
		48,&\text{if~}\mathfrak B =0 \text{~and~} \vert b_4 \vert + \vert b_5 \vert\neq 0\\
		45,&\text{if~}\mathfrak B \neq 0.	
	\end{cases}
	\end{equation}
	
	\begin{remark}
		Under the condition $b_4 = b_5 = 0$, the ideal $I$ is homogeneous. Hence,
		the parity conjecture holds true for $I$ \cite{SAMMARTANO}.
		
		For ideals with tangent space of dimension $48$, we notice that $b_4 \neq 0$ (in fact $b_4 = b_3b_4^3 - b_2b_4^2b_5 + b_1b_4b_5^2 - b_5^3 = 0$ implies $b_5 = 0$).  Thus, one has that
		\[
		b_3 = b_2\frac{b_5}{b_4} - b_1 \left(\frac{b_5}{b_4}\right)^2 + \left(\frac{b_5}{b_4}\right)^3
		\]
		and the polynomial $f_{y^3}$ becomes reducible
		\[
		\begin{split}
			&y^3 + b_1 y^2 z + b_2 y z^2 + \left(b_2\frac{b_5}{b_4} - b_1 \left(\frac{b_5}{b_4}\right)^2 + \left(\frac{b_5}{b_4}\right)^3\right) z^3 + b_4 x y + b_5 xz = {}\\
			&\qquad\qquad {} = \frac{1}{b_4^3} \left(b_4y+b_5 z\right) \big( b_4^2 y^2 + b_4(b_1b_4-b_5) yz + (b_2b_4^2-b_1b_4b_5+b_5^2)z^2 + b_4^3 x\big).
		\end{split}
		\]
	\end{remark}

 \begin{remark}\label{rem:dim char}
 The classification \eqref{eq:tableTangentSpaceDim} of the dimension of the tangent space $\mathsf{T}_{[I]} \Hilb^{12} \A^3$ holds for every characteristic other than 2.
 If the characteristic is 2, the rank of the matrix $S$ is at most $50$ and for $\mathfrak{B}\neq 0$ the tangent space has dimension 46.
 \end{remark}

\subsection{Computational details of the proof of Proposition \ref{prop:smoothability}}\label{subsec:smoothability}
	
The detailed computation of the smoothing deformation is available in the \emph{M2} ancillary file \href{\linkProofSmoothability}{\tt\small proof\_smoothability.m2}. 
	
The ideal \eqref{eq:smoothingDef} for $t \neq 0$ is the intersection of the ideals
\[
\begin{split}
	P_1 ={}&  \big( (x - b_4t^2)^2, y^2,\ (b_3b_4 - b_2b_5)yz - b_3b_5\, z^2 - b_5^2 (x - b_4t^2),\ b_3 yz^2 + b_5y(x - b_4t^2), \\
	&\quad b_2 yz^2 + b_3z^3 + (b_4y+b_5z)(x - b_4t^2),\ (x - b_4t^2)z^2,\ (x - b_4t^2) yz,\ yz^3,\ z^4 \big),\\
	P_2 ={}&  \Bigg( 
		x - \tfrac{b_4^3(b_3b_4-b_2b_5)}{\mathfrak B}t^2,\ b_4y+b_5z,\ \left(x - \tfrac{b_4^3(b_3b_4-b_2b_5)}{\mathfrak B}t^2\right)\Big(b_5^2y + \big(b_4(b_3b_4-b_2b_5)+b_1b_5^2\big)z\Big) +{}\\ &\quad \tfrac{b_4^3b_5^2(b_3b_4-b_2b_5)}{\mathfrak B}t^2\, y + \tfrac{b_4^2b_5^3(b_3b_4-b_2b_5)}{\mathfrak B}t^2\, z, y^2, yz, z^2 \Bigg),\\
	P_3 ={}&  \big( x,\ 2b_4( y - b_4t) + (b_1b_4-b_5)z,\ (2y+b_1z)(y-b_4t) + (b_1b_4-b_5)t\, z, (y-b_4t)z, z^2 \big),\\
	P_4 = {}& \big(x,y+b_4t,z\big),
\end{split}
\]
of length $7,2,2,1$ and supported at the points 
\[
(b_4t^2,0,0),\qquad\left(\tfrac{b_4^3(b_3b_4-b_2b_5)}{\mathfrak{B}}t^2,0,0\right),\qquad(0,b_4t,0),\qquad(0,-b_4t,0).
\]
	
Figure \ref{fig:collision} illustrates the deformation for $b_1 = 0$, $b_2 = -\frac{1}{2}$, $b_3 = 0$, $b_4 = 1$ and $b_5 = 1$.
In the figure any arrangement of boxes represents a chosen order ideal describing a monomial basis for $R/P_i$ for $i=1, \ldots, 4$ and $R/I$.
	
We remark that ideals $I$ corresponding to points in the locus $\mathfrak{B}=0$ still have a first order deformation involving the generator $xyz$ perturbed by $\epsilon y^2z$. But if $\mathfrak{B}=0$ and $b_4 \neq 0$, this deformation is obstructed. And, if $b_4=b_5 =0$, then the first order deformation lifts to the family
\[
\left(x^2,\ xy^2,\ xyz + \boldsymbol{t} y^2z,\ xz^2,\ y^2z^2,\ yz^3,\ z^4,\  y^3 + b_1y^2z+b_2yz^2+b_3z^3\right)
\]
whose generic fiber is irreducible.

\newcommand{\etalchar}[1]{$^{#1}$}
\providecommand{\bysame}{\leavevmode\hbox to3em{\hrulefill}\thinspace}
\providecommand{\MR}{\relax\ifhmode\unskip\space\fi MR }
\providecommand{\MRhref}[2]{%
  \href{http://www.ams.org/mathscinet-getitem?mr=#1}{#2}
}

\end{document}